\documentclass[a4paper]{amsart}

\usepackage{amssymb,amsmath,amsthm}
\usepackage{mathrsfs}
\usepackage{graphicx}
\usepackage{wrapfig}
\usepackage{caption}
\usepackage{subcaption}
\usepackage{enumerate}
\usepackage[all]{xypic}
\usepackage{multicol}
 \usepackage{url}

\newcommand{\maxx}{\textup{\emph{M}\thinspace}}
\newcommand{\tax}{\textup{\emph{T}\thinspace}}
\newcommand{\cax}{\textup{\emph{C}\thinspace}}
\newcommand{\kax}{\textup{\emph{K}\thinspace}}
\newcommand{\four}{\textup{\emph{4}\thinspace}}
\newcommand{\true}{\textup{\emph{N}\thinspace}}
\newcommand{\dax}{\textup{\emph{D}\thinspace}}

\newcommand{\sfour}{\textup{$\mathbf{S4}$}\thinspace}
\newcommand{\mntf}{\textup{$\mathbf{MNT4}$}\thinspace}

\newcommand{\mtf}{\textup{$\mathbf{MT4}$}\thinspace}
\newcommand{\cpc}{\textup{$\mathbf{CPC}$}\thinspace}

\newcommand{\giez}{\textup{$\mathbf{G}_{0}$}\thinspace}
\newcommand{\giej}{\textup{$\mathbf{G}_{1}$}\thinspace}

\newcommand{\nec}{\textup{\emph{RN}\thinspace}}
\newcommand{\monot}{\textup{\emph{RM}\thinspace}}
\newcommand{\mpon}{\textup{\emph{MP}\thinspace}}
\newcommand{\rext}{\textup{\emph{RE}\thinspace}}

\newcommand{\gt}{\textup{\textbf{GT}\thinspace}}
\newcommand{\gtf}{\textup{\textbf{GT$\mathcal{F}$}\thinspace}}
\newcommand{\gts}{\textup{\emph{GTS}\thinspace}}
\newcommand{\sgts}{\textup{\emph{sGTS}\thinspace}}
\newcommand{\sgt}{\textup{\textbf{sGT}\thinspace}}

\newcommand{\gtn}{\textup{\textbf{GTn}\thinspace}}

\newcommand{\bgtf}{\textup{\textbf{bGT$\mathcal{F}$}\thinspace}}

\newcommand{\gtff}{\textup{\textbf{GT$\mathbf{f}$}\thinspace}}
\newcommand{\gtfi}{\textup{\textbf{GT$\mathbf{f2}$}\thinspace}}

\title[Generalized topological semantics for weak modal logics]{Generalized topological semantics \\ for weak modal logics}
\author{Tomasz Witczak}

\address{Institute of Mathematics\\ University of Silesia\\ Bankowa~14\\ 40-007 Katowice\\ Poland}

\email{tm.witczak@gmail.com}

\date{}

\theoremstyle{Theorem}
\newtheorem{tw}{Theorem}[section]

\theoremstyle{Lemma}
\newtheorem{lem}[tw]{Lemma}

\theoremstyle{Remark}

\theoremstyle{Remark}

\theoremstyle{Definition}
\newtheorem{df}[tw]{Definition}

\theoremstyle{Remark}

\usepackage{graphicx}

\begin{document}

\maketitle

\begin{abstract}
Generalized topological spaces are not necessarily closed under finite intersections. Moreover, the whole universe does not need to be open. We use modified version of this framework to establish certain models for non-normal modal logics. We consider at least two approaches to this topic, wherein one of them is based on the interplay of two operators, both reflecting the idea of necessity. The second one leads to the quite known logics \mtf and \mntf. We obtain some completeness results and we prove correspondence between different classes of frames. Also, three types of bisimulation are investigated.
\end{abstract}

\section{Introduction}

Topological notions are very useful in formal logic. They form bridge between possible-world semantics (which can be considered as somewhat abstract) and well-known mathematical objects (like real line, real plane or Cantor set). Topology allows us to discuss various properties of possible-world frames (depending on axioms of separation or on the notions of density, compactness etc.). Moreover, sometimes these properties can be characterized by means of specific formulas. 

On the other hand, topology is rather strong notion. For example, topological semantics for modal logics leads us to systems not weaker than \sfour. They are equivalent with the so-called \sfour \emph{neighbourhood frames} (see \cite{pacuit}). However, neighbourhoods are most frequently used with non-normal logics, sometimes very weak. The problem is that in topology \emph{neighbourhood} is very rigorous notion, while in possible-worlds semantics it is just an arbitrary (maybe even empty) collection of these worlds which are \emph{assigned} to the given world $w$. 

For this reason, it is difficult to speak about topological semantics for logics weaker than \sfour, not to mention non-normal systems. Hence, we use the concept of \emph{generalized} topological spaces analysed by Cs\'{a}sz\'{a}r in \cite{csaszar}. It is quite natural: the author discarded superset axiom (i.e. the whole universe may not be open) and he assumed that his family is not closed under finite intersections. 

One should be aware that Cs\'{a}sz\'{a}r's spaces are not his own invention. Rather, he rediscovered them, starting their systematical study (then continued by the authors from all over the world). In fact, they have been known earlier as \emph{interior systems} (closely related to \emph{Moore families}). In formal concept analysis and data clustering they are investigated as \emph{extensional abstractions} (see \cite{soldano}) or \emph{knowledge spaces} (see \cite{doignon}). Be as it may, in the past twenty years there have been developed generalized analogues of many topological notions: separation axioms, (nowhere) density, continuity, convergence or topological groups. 

Moreover, there are also further generalizations of the notion of topology: pretopologies, peritopologies, 
(generalized) weak structures or minimal structures. It seems that their logical applications are still on the initial stage, at least in the context of possible worlds semantics. Some interesting results for peritopological spaces (e.g. about modal (in)definability of separation axioms) have been obtained by Ahmet and Terziler in \cite{ahmet}. 

\section{Alphabet and language}

Speaking about logic, we use rather standard language:

\begin{enumerate}
\item $PV$ is a fixed denumerable set of propositional variables $p, q, r, s, ...$
\item Logical connectives and operators are $\land$, $\lor$, $\rightarrow$, $\bot$, $\lnot$ and $\Box$.
\end{enumerate}

Formulas are generated recursively in a standard manner: if $\varphi$, $\psi$ are \emph{wff's} then also $\varphi \lor \psi$, $\varphi \land \psi$, $\varphi \rightarrow \psi$ and $\Box \varphi$. Attention: $\Leftarrow, \Rightarrow$ and $\Leftrightarrow$ are used only on the level of meta-language.

We shall work with the following list of axioms schemes and rules:

{\small
\begin{multicols}{2}
\begin{itemize}

\item \maxx: $\Box(\varphi \land \psi) \rightarrow \Box \varphi \land \Box \psi$
\item \cax: $\Box \varphi \land \Box \psi \rightarrow \Box(\varphi \land \psi) $
\item \tax: $\Box \varphi \rightarrow \varphi$
\item \dax: $\Box \rightarrow \lnot \Box \lnot \varphi$
\item \kax: $\Box (\varphi \rightarrow \psi) \rightarrow (\Box \varphi \rightarrow \Box \psi)$
\item \four: $\Box \varphi \rightarrow \Box \Box \varphi$
\item \true: $\Box \top$ (\emph{truth axiom})
\item \rext: $\varphi \leftrightarrow \psi \vdash \Box \varphi \leftrightarrow \Box \psi$ (rule of extensionality)
\item \nec: $\varphi \vdash \Box \varphi$ (rule of necessity)
\item \monot: $\varphi \rightarrow \psi \vdash \Box \varphi \rightarrow \Box \psi$ (rule of monotonicity)
\item \mpon: $\varphi, \varphi \rightarrow \psi \vdash \psi$ (\emph{modus ponens})

\end{itemize}
\end{multicols}}

Later we shall discuss validity of these formulas in various semantical settings.

\section{Generalized topological spaces}

Here we recall the concept of Cs\'{a}sz\'{a}r but with our own notation which is adapted to our further logical considerations. 

\begin{df}
\label{gentop}

Assume that there is given a non-empty set (universe) $W$. We say that $\mu \subseteq P(W)$ is a generalized topology on $W$ \emph{iff}:

\begin{enumerate}

\item $\emptyset \in \mu$.

\item If $J$ is an arbitrary non-empty set and for each $i \in J$, $X_i \in \mu$, then $\bigcup_{i \in J} X_i \in \mu$.

\end{enumerate}
\end{df}

We say that $\mu$ is \emph{strong} if (and only if) $W \in \mu$. We denote such space by \sgts. Any member of $\mu$ is called $\mu$-\emph{open set}. Later we shall speak just about \emph{open sets}. Any possibility of misunderstanding will be signalized. We define $\mu$-\emph{interior} of $X \subseteq W$ as the greatest open set contained in $X$ (or equivalently as the union of all open sets contained in $X$). We denote it by $Int (X)$. Finally, we say that $(W, \mu)$ is a \emph{generalized topological space} (\gts, g.t.s. or gen.top.).

Below we present several examples of (finite and infinite) \gt-spaces (taken from \cite{sarsak}, \cite{baskar} and \cite{zand}):
{\small
\begin{enumerate}
\item $W = \{a, b, c\}$, $\mu = \{\emptyset, \{a\}, \{b\}, \{a, b\}\}$.

\item $W = \{a, b, c\}$, $\mu = \{\emptyset, \{a\}, \{c\}, \{a, b\}, \{a, c\}, \{b, c\}, W\}$.

\item $W = \mathbb{R}$, $\mu = \{A \subseteq \mathbb{R}; A \subseteq \mathbb{R} \setminus \mathbb{Z}\}$ (so-called $\mathbb{Z}$\emph{-forbidden} \gt).

\item $W$ is arbitrary, $\emptyset \neq X \subseteq W$, $\mu = \{A \subseteq W; A \subseteq W \setminus X\}$. This is generalization of the preceding example.

\item $W = \mathbb{Z}$, $\mu = \{\emptyset, \{1\}, \{1, 3\}, \{1, 3, 5\}, \{1, 3, 5, 7\}, ... \}$.

\item $W = \mathbb{R}$, $\mu = \{\emptyset\} \cup \{A \subseteq \mathbb{R}; A \setminus \{w\} \subseteq A$ for certain $w \in \mathbb{R}\}$.

\item $|W| > \omega_{0}$, $v \notin W$, $W^{*} = W \cup \{v\}$, $\mu = \{\emptyset, \{v\} \cup \{W \setminus A\}; A \subseteq W, |A| \leq \omega_{0}\}$, our space is $\langle W^{*}, \mu \rangle$. 

\item $W = \mathbb{R}$, $\Lambda = \{ [a, b]; a, b \in \mathbb{R}, a < b\}$, $\mu$ is a collection of all unions which belong to $\Lambda$. 
\end{enumerate}}

As we have already said, many topological notions have their generalized counterparts. It seems that in many cases \gt-spaces behave in a manner identical with that of ordinary spaces. However, there are some situations in which the whole case becomes less trivial. For example, a group of Polish authors (\cite{korczak}) pointed out that two typical definitions of \emph{nowhere density}, which are equivalent in a standard framework, are not equivalent in \gt-spaces. This allows us to distinguish between \emph{nowhere dense sets} (for which $Int(Cl(A)) = \emptyset$) and \emph{strongly nowhere dense sets} (for any $G \in \mu$ there is $H \in \mu$ such that $H \subseteq G$ and $A \cap H = \emptyset$). Our authors used this distinction in their research about Baire spaces and Banach games. Of course this is completely beyond the scope of our present paper but we would like just to emphasize the fact that generalizations of topology have their internal value. 

\subsection{Generalized topological models with $\mathcal{F}$}

In this section we show certain modal logic framework which is based on the notion of \gt-space. In fact, we extend the basic notion even further. Let us take a look at the structure of our space (which is not necessarily strong). We see that there is a maximal open set $\bigcup \mu$. Hence, there are two kinds of points: those which are in $\bigcup \mu$ and those which are beyond this set. The latter points are in some sense \emph{orphaned}. Recall that in topological semantics we customarily use open neighbourhoods to speak about forcing of modal formulas. Hence, we propose to associate each orphaned point with certain family of open sets by means of a special function $\mathcal{F}$. Here we introduce formal definition of our new structure:

\begin{df}
\label{gtmod}
We define \gtf-model as a quadruple $M_\mu = \langle W_\mu, \mu, \mathcal{F}, V_\mu \rangle$ where $\mu$ is a generalized topology on $W_\mu$, $V_\mu$ is a function from $PV$ into $P(W)$ and $\mathcal{F}$ is a function from $W_\mu$ into $P(P((\bigcup \mu))$ such that:

\begin{itemize}
\item If $w \in \bigcup \mu$, then $[X \in \mathcal{F}_{w} \Leftrightarrow X \in \mu \text{ and } w \in X]$ {\normalfont [$\mathcal{F}_{w}$ is a shortcut for $\mathcal{F}(w)$]}.
\item If $w \in W_\mu \setminus \bigcup \mu$, then $[X \in \mathcal{F}_{w} \Rightarrow X \in \mu]$.
\end{itemize}
\end{df}

$\mathcal{F}$ can be considered as an arbitrary extension (on the whole $W$) of the map that associates with any $w \in \bigcup \mu$ the principal filter generated by this $w$. 

If there is no valuation established, then we say that $\langle W_\mu, \mu, \mathcal{F} \rangle$ is a \gtf-frame. We use the following symbol: $A^{-1} = \{z \in W; A \in \mathcal{F}_{z}\}$ for any $A \in \mu$. 

\begin{df}
If $M = \langle W_{\mu}, \mu, V_\mu \rangle$ is an \gtf-model, then we define relation $\Vdash_\mu$ between worlds and formulas in the following way:

\begin{enumerate}
\item $w \Vdash_\mu q \Leftrightarrow w \in V_\mu(q)$ for any $q \in PV$.
\item $w \Vdash_\mu \varphi \land \psi$ (resp. $\varphi \lor \psi$) $\Leftrightarrow w \Vdash_\mu \varphi$ and (resp. or) $w \Vdash_\mu \psi$.
\item $w \Vdash_\mu \varphi \rightarrow \psi \Leftrightarrow w \nVdash_\mu \varphi$ or $w \Vdash_\mu \psi$.
\item $w \Vdash_\mu \lnot \varphi \Leftrightarrow w \nVdash_\mu \varphi$.
\item $w \Vdash_\mu \Box \varphi \Leftrightarrow \text{ there is } \mathcal{O}_{w} \in \mathcal{F}_{w} \text{ such that for each } v \in \mathcal{O}_{w}, v \Vdash \varphi$.
\end{enumerate}
\end{df} 

Now let us define generalized topological neighbourhoods in this framework. 

\begin{df}
\label{gennei}
If $\langle W_\mu, \mu, \mathcal{F} \rangle$ is a \gtf-frame, then for each $w \in W_\mu$ we define its family of generalized topological neighbourhoods as:

$\mathcal{N}^{\mu}_{w} = \{X \subseteq \bigcup \mathcal{\mu} \text{ such that  there is } O_w \in \mathcal{F}_{w} \text{ that } \mathcal{O}_{w} \subseteq X\}$.
\end{df}


In this paper we use \gtf-structures merely as logical models. However, we have developed a small theory of these structures, which was presented in \cite{asso}. In particular, we have introduced and studied several topological notions like $\mathcal{F}$-interior (-closure), $\mathcal{F}$-convergence (of generalized nets) and some separation axioms. 

\subsection{Suitable neighbourhood frames}

Now let us think about corresponding neighbourhood frames. Our goal is to obtain pointwise equivalence between them and \gtf-structures.

\begin{df}
\label{topmod}
We define \gtn-model as a triple $M = \langle W, \mathcal{N}, V \rangle$ where $V$ is a function from $PV$ into $P(W)$, $\mathcal{N}$ is a function from $W$ into $P(P(W))$ and:

\begin{enumerate}

\item $\bigcup \mathcal{N}$ is a union of all sets $X$ for which there is $w \in W$ such that $X \in \mathcal{N}_{w}$. {\normalfont [Namely, $\bigcup \mathcal{N}$ is a union of \emph{all} neighbourhoods]}.

\item $W = W_{1} \cup W_{2}$, where $W_{1} = \{z \in W; z \in \bigcap \mathcal{N}_{z}\}$ and $W_{2} = \{z \in W; z \notin \bigcup \mathcal{N}\}$.

\item $X \in \mathcal{N}_{w}$ and $X \subseteq Y \subseteq \bigcup \mathcal{N} \Rightarrow Y \in \mathcal{N}_{w}$.

\item $X \in \mathcal{N}_{w} \Rightarrow \{z \in W_1; X \in \mathcal{N}_{z}\} \in \mathcal{N}_{w}$.

\end{enumerate}
\end{df}

Note that we have two kinds of worlds. Those which are in \emph{certain} neighbourhood, are also in each of \emph{their own} neighbourhoods. As for the worlds from $W_{2}$, they have their neighbourhoods but they do not belong to \emph{any} neighbourhood.

As for the modal forcing, we define it in the following manner:

$w \Vdash \Box \varphi \Leftrightarrow \text{ there is } X \in \mathcal{N}_{w} \text{ such that } X \subseteq V(\varphi)$.


\subsection{From neighbourhoods to general topologies}

Let us take our new structure and introduce topology into it. We propose the following definition:

\begin{lem}
\label{mu}
Assume that $M = \langle W, \mathcal{N}, V \rangle$ is a \gtn-model. Then the set $\mu = \{X \subseteq \bigcup \mathcal{N}; w \in X \Rightarrow X \in \mathcal{N}_{w}\}$ forms a generalized topology with $W$ as its universe. Equivalently, $\mu$ forms strong generalized topology with $\bigcup \mathcal{N}$ as its universe.
\end{lem}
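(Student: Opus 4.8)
The plan is to read off the two clauses of Definition \ref{gentop} directly from the four conditions imposed on a \gtn-model in Definition \ref{topmod}, and then to identify the maximal open set so as to deduce strongness. Throughout I keep in mind that membership in $\mu$ is a conjunction of two demands: containment in $\bigcup\mathcal{N}$, and the local condition $w\in X\Rightarrow X\in\mathcal{N}_w$.

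First I would dispatch the empty set. Since $\emptyset\subseteq\bigcup\mathcal{N}$ and the implication ``$w\in\emptyset\Rightarrow\emptyset\in\mathcal{N}_w$'' holds vacuously, $\emptyset\in\mu$ with no appeal to the frame conditions at all. Next, and this is the bulk of the routine work, I would verify closure under arbitrary unions. Given a non-empty family $\{X_i\}_{i\in J}\subseteq\mu$ and a point $w\in X:=\bigcup_{i\in J}X_i$, I pick $i$ with $w\in X_i$; then $X_i\in\mathcal{N}_w$ because $X_i\in\mu$, and since $X_i\subseteq X\subseteq\bigcup\mathcal{N}$, the supplementation condition (clause (3) of Definition \ref{topmod}) promotes $X_i$ to $X$, so $X\in\mathcal{N}_w$. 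As $w$ was arbitrary and $X\subseteq\bigcup\mathcal{N}$, we get $X\in\mu$. These two steps already establish that $\mu$ is a generalized topology; one may regard $W$ as the universe, or equivalently $\bigcup\mathcal{N}$, since every member of $\mu$ is contained in the latter.

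For the strong reformulation I must exhibit $\bigcup\mathcal{N}$ itself as open, i.e.\ show $\bigcup\mathcal{N}\in\mathcal{N}_w$ for every $w\in\bigcup\mathcal{N}$; once this holds, $\bigcup\mathcal{N}$ is the largest member of $\mu$ and strongness over $\bigcup\mathcal{N}$ follows. The route is to invoke the decomposition $W=W_1\cup W_2$ (clause (2)): a point $w\in\bigcup\mathcal{N}$ cannot lie in $W_2$, hence lies in $W_1$, so $w$ belongs to each of its own neighbourhoods. Granting that such a $w$ possesses at least one neighbourhood $O\in\mathcal{N}_w$, the inclusion $O\subseteq\bigcup\mathcal{N}$ together with a final application of supplementation (clause (3), with $X=O$ and $Y=\bigcup\mathcal{N}$) yields $\bigcup\mathcal{N}\in\mathcal{N}_w$, as required.

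The delicate point — the genuine obstacle, as opposed to the union-closure bookkeeping — is exactly the proviso ``$w$ possesses at least one neighbourhood''. This is where the reading of clause (2) is decisive: the defining membership $w\in\bigcap\mathcal{N}_w$ of $W_1$ must be understood so that the pathological case is excluded, namely a point $w$ that belongs to some \emph{other} world's neighbourhood yet has $\mathcal{N}_w=\emptyset$. If such a $w$ were admitted into $W_1$ on a vacuous reading of the intersection, it would sit in $\bigcup\mathcal{N}$ while $\bigcup\mathcal{N}\notin\mathcal{N}_w$, and strongness would collapse. I would therefore make explicit that $W=W_1\cup W_2$ forces every element of $\bigcup\mathcal{N}$ into $W_1$ \emph{with a non-empty neighbourhood family} (a point that is neither orphaned nor equipped with a neighbourhood would belong to neither $W_1$ nor $W_2$), and I would record that clause (4) is consistent with this state of affairs, so that the concluding supplementation step is legitimate.
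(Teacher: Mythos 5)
Your proof is correct, and in fact it does the work that the paper omits: the paper's entire proof of Lemma \ref{mu} is the sentence ``The proof is simple. Note that we assume that open sets are contained in $\bigcup \mathcal{N}$.'' Your verification of the two axioms of Definition \ref{gentop} (the vacuous case of $\emptyset$, and union-closure via the supplementation clause (3) of Definition \ref{topmod}) is exactly the routine argument being alluded to, so on that half you and the paper agree trivially. Where you genuinely diverge is the strongness claim. The paper never argues it inside this lemma; it is effectively delegated to the \emph{next} lemma ($\bigcup \mu = \bigcup \mathcal{N}$), whose proof begins ``Let us take any $X \in \mathcal{N}_{w}$'' --- silently presupposing exactly the non-emptiness of $\mathcal{N}_{w}$ that you isolate as the delicate point. (Since any generalized topology is automatically strong over the union of its own members, strongness over $\bigcup \mathcal{N}$ is equivalent to the identity $\bigcup \mu = \bigcup \mathcal{N}$; your direct supplementation argument therefore absorbs the nontrivial direction of that next lemma into this one. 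Either organization is fine.)

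Your caveat about the vacuous reading of $z \in \bigcap \mathcal{N}_{z}$ is not pedantry: under that reading the strongness claim is actually false. Take $W = \{a, b\}$, $\mathcal{N}_{a} = \{\{a\}, \{a,b\}\}$, $\mathcal{N}_{b} = \emptyset$. Then $\bigcup \mathcal{N} = \{a,b\}$, all four conditions of Definition \ref{topmod} hold (with $W_{1} = W$ and $W_{2} = \emptyset$ if the empty intersection is read as everything), yet $\mu = \{\emptyset, \{a\}\}$, so $\bigcup \mathcal{N} \notin \mu$ and $\bigcup \mu \neq \bigcup \mathcal{N}$. So the non-vacuous reading you adopt --- membership in $W_{1}$ requires a non-empty neighbourhood family --- is exactly what is needed to make both this lemma and the following one true, and it matches the paper's informal gloss that worlds in $W_1$ ``are in certain neighbourhood.'' Making that reading explicit, as you do, is an improvement on the source.
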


\begin{proof}
The proof is simple. Note that we assume that open sets are contained in $\bigcup \mathcal{N}$.
\end{proof}

The next lemma is simple but in some sense crucial:

\begin{lem}
Assume that $M = \langle W, \mathcal{N}, V \rangle$ is a \gtn-model, $\mu = \{X \subseteq \bigcup \mathcal{N}; w \in X \Rightarrow X \in \mathcal{N}_{w}\}$ is a generalized topology and $\bigcup \mu$ is the union of all open sets. Then $\bigcup \mu = \bigcup \mathcal{N}$. 
\end{lem}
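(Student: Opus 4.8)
The plan is to prove the two inclusions $\bigcup\mu\subseteq\bigcup\mathcal{N}$ and $\bigcup\mathcal{N}\subseteq\bigcup\mu$ separately. The first is immediate from the very definition of $\mu$: by Lemma \ref{mu} every $X\in\mu$ satisfies $X\subseteq\bigcup\mathcal{N}$, so the union of all open sets is again a subset of $\bigcup\mathcal{N}$, giving $\bigcup\mu\subseteq\bigcup\mathcal{N}$ at once. Nothing in this direction uses conditions (2)--(4); it is purely the requirement that open sets be carved out of $\bigcup\mathcal{N}$.

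For the reverse inclusion the cleanest route is to show that $\bigcup\mathcal{N}$ is itself open, i.e. $\bigcup\mathcal{N}\in\mu$; once this is established, $\bigcup\mathcal{N}$ is one of the sets whose union forms $\bigcup\mu$, so trivially $\bigcup\mathcal{N}\subseteq\bigcup\mu$. To verify $\bigcup\mathcal{N}\in\mu$ I would check the defining clause of $\mu$ pointwise: fix an arbitrary $w\in\bigcup\mathcal{N}$ and argue that $\bigcup\mathcal{N}\in\mathcal{N}_w$. Since $w\in\bigcup\mathcal{N}$, condition (2) excludes $w$ from $W_2$ and places it in $W_1$, so $w$ is a reflexive point possessing a neighbourhood $X\in\mathcal{N}_w$ with $w\in X$. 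As $X$ is a neighbourhood we have $X\subseteq\bigcup\mathcal{N}$, and then the supplementation condition (3), taking $Y=\bigcup\mathcal{N}$ as the enclosing set, upgrades $X$ to $\bigcup\mathcal{N}$, yielding $\bigcup\mathcal{N}\in\mathcal{N}_w$. Because $w$ was arbitrary, every point of $\bigcup\mathcal{N}$ has $\bigcup\mathcal{N}$ among its neighbourhoods, which is exactly the condition for $\bigcup\mathcal{N}$ to belong to $\mu$.

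The step that deserves care --- and the reason the lemma is ``crucial'' despite being simple --- is the passage from $w\in\bigcup\mathcal{N}$ to the existence of a neighbourhood of $w$ itself. Membership in $\bigcup\mathcal{N}$ only says that $w$ lies inside somebody's neighbourhood; it is condition (2), forcing every non-orphaned point into $W_1$, that guarantees $w$ is in each of its own neighbourhoods and in particular that $\mathcal{N}_w$ is non-empty. Once this is in hand, monotonicity (3) does the rest with no appeal to the transitivity-like condition (4). An alternative, if one prefers to avoid claiming that $\bigcup\mathcal{N}$ is open, is to produce for each $z\in\bigcup\mathcal{N}$ a smaller open witness: taking $Y\in\mathcal{N}_z$ and setting $Y^{+}=\{v\in W_1;\,Y\in\mathcal{N}_v\}$, condition (4) makes $Y^{+}\in\mathcal{N}_v$ for every $v\in Y^{+}$, so $Y^{+}\in\mu$, while $z\in W_1$ and $Y\in\mathcal{N}_z$ give $z\in Y^{+}$; this again places $z$ in $\bigcup\mu$. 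Either way the two inclusions combine to give $\bigcup\mu=\bigcup\mathcal{N}$.
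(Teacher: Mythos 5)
Your proof is correct, and for the substantive inclusion $\bigcup\mathcal{N}\subseteq\bigcup\mu$ it takes a genuinely different route from the paper's. The paper likewise argues pointwise from $w\in\bigcup\mathcal{N}$ via condition (2) to $w\in W_1$, but then takes an arbitrary $X\in\mathcal{N}_w$ and asserts that this $X$ is itself $\mu$-open. That step is not justified: openness of $X$ demands $X\in\mathcal{N}_z$ for \emph{every} $z\in X$, which does not follow from $w\in X$ and $X\subseteq\bigcup\mathcal{N}$. Indeed it can fail: take $W=\{a,b,c\}$, let $\mathcal{N}_a$ and $\mathcal{N}_c$ consist of all sets containing $a$ (respectively $c$), and let $\mathcal{N}_b=\{\{b,c\},W\}$; conditions (1)--(4) of the definition all hold, yet $\{a,b\}\in\mathcal{N}_a$ is not $\mu$-open, since $\{a,b\}\notin\mathcal{N}_b$. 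Your main argument avoids this defect by choosing a different open witness, namely $\bigcup\mathcal{N}$ itself, which condition (3) promotes to a neighbourhood of every point possessing any neighbourhood at all; your alternative argument, which replaces $Y\in\mathcal{N}_z$ by its core $Y^{+}=\{v\in W_1;\,Y\in\mathcal{N}_v\}$ and invokes condition (4), is precisely the device the paper itself uses later in the transformation theorem, and it is the natural repair of the paper's pointwise step. One caveat applies equally to your proof and the paper's: both need $\mathcal{N}_w\neq\emptyset$ for $w\in\bigcup\mathcal{N}$, and this follows from condition (2) only if one reads ``$z\in\bigcap\mathcal{N}_z$'' as failing when $\mathcal{N}_z=\emptyset$; under the opposite convention ($\bigcap\emptyset=W$) the lemma itself becomes false (e.g. $W=\{a,b\}$, $\mathcal{N}_a=\{\{a\},\{a,b\}\}$, $\mathcal{N}_b=\emptyset$ gives $\bigcup\mu=\{a\}\neq\{a,b\}=\bigcup\mathcal{N}$). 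You at least flag this dependence explicitly, which the paper does not.
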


\begin{proof} 
$(\subseteq)$ If $w \in \bigcup \mu$, then there is $Y \in \mu$ such that $w \in Y$. But by the very definition of $\mu$, $Y \subseteq \bigcup \mathcal{N}$. 

$(\supseteq)$ Suppose that $w \in \bigcup \mathcal{N}$. Then $w \in \bigcap \mathcal{N}_{w}$. Let us take any $X \in \mathcal{N}_{w}$. Then $X \subseteq \bigcup \mathcal{N}$ and $w \in X$, so $X$ is $\mu$-open. Thus $w \in \bigcup \mu$.
\end{proof}

Now we can go to our transformation:

\begin{tw}
Let $M = \langle W, \mathcal{N}, V \rangle$ be a \gtn-model. Then there exists pointwise equivalent \gtf-model $M_\mu = \langle W_\mu, \mu, \mathcal{F}, V_\mu \rangle$. 

\end{tw}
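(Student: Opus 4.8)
The plan is to keep the universe and valuation fixed, take $\mu$ from Lemma~\ref{mu}, and define $\mathcal{F}$ uniformly by intersecting each neighbourhood family with $\mu$. Concretely, I would set $W_\mu = W$, let $\mu = \{X \subseteq \bigcup\mathcal{N}; w \in X \Rightarrow X \in \mathcal{N}_w\}$ be the generalized topology supplied by Lemma~\ref{mu}, put $V_\mu = V$, and declare $\mathcal{F}_w = \mathcal{N}_w \cap \mu$ for every $w \in W_\mu$. Recall from the two lemmas above that $\bigcup\mu = \bigcup\mathcal{N}$, and that $\bigcup\mathcal{N} \subseteq W_1$ (since $W = W_1 \cup W_2$ with $W_2 = W \setminus \bigcup\mathcal{N}$), so $W_\mu \setminus \bigcup\mu = W_2$.

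First I would verify that this $\mathcal{F}$ is admissible in the sense of Definition~\ref{gtmod}. For $w \in \bigcup\mu$ we have $w \in \bigcap\mathcal{N}_w$, and then the defining clause of $\mu$ yields $\mathcal{N}_w \cap \mu = \{X \in \mu; w \in X\}$: an open set containing $w$ is a neighbourhood of $w$ by the definition of $\mu$, while an open neighbourhood contains $w$ because $w \in \bigcap\mathcal{N}_w$. For $w \in W_\mu \setminus \bigcup\mu = W_2$ one only needs $\mathcal{F}_w \subseteq \mu$, which holds by construction. Hence both bullet conditions of Definition~\ref{gtmod} are met and $M_\mu$ is a genuine \gtf-model.

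The heart of the matter is the pointwise equivalence $w \Vdash \varphi \Leftrightarrow w \Vdash_\mu \varphi$, which I would establish by induction on $\varphi$. As $W_\mu = W$ and $V_\mu = V$, the atomic and Boolean cases are immediate, so everything reduces to the modal clause, where I write $V(\varphi) = \{v; v \Vdash \varphi\}$ (by the inductive hypothesis this is the same set in both models). The direction from \gtf to neighbourhoods is easy: any witnessing $\mathcal{O}_w \in \mathcal{F}_w$ lies in $\mathcal{N}_w$, so it already witnesses $w \Vdash \Box\varphi$. The converse is the real obstacle, since from an arbitrary neighbourhood I must manufacture an \emph{open} one contained in $V(\varphi)$. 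Here I would invoke condition (4) of Definition~\ref{topmod}: given $X \in \mathcal{N}_w$ with $X \subseteq V(\varphi)$, set $X^{*} = \{z \in W_1; X \in \mathcal{N}_z\}$. For each $z \in X^{*}$ we have $z \in W_1$ and $X \in \mathcal{N}_z$, hence $z \in \bigcap\mathcal{N}_z \subseteq X$; this gives at once $X^{*} \subseteq X \subseteq \bigcup\mathcal{N}$ and $X^{*} \subseteq V(\varphi)$. Condition (4) applied at each such $z$ gives $X^{*} \in \mathcal{N}_z$, so $X^{*}$ is $\mu$-open, and condition (4) applied at $w$ gives $X^{*} \in \mathcal{N}_w$; therefore $X^{*} \in \mathcal{N}_w \cap \mu = \mathcal{F}_w$ and it witnesses $w \Vdash_\mu \Box\varphi$.

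I expect the superset axiom (condition (3)) to be idle in this particular argument; the whole weight falls on condition (4), which behaves precisely as the closure principle making $\Box$ act like a topological interior. The only points deserving any care are the bookkeeping over the two kinds of worlds and the degenerate case $\mathcal{N}_w = \emptyset$ (where both semantics refute every $\Box\varphi$, so no witness is needed), but these become routine once the identity $\bigcup\mu = \bigcup\mathcal{N}$ and the inclusion $\bigcup\mathcal{N} \subseteq W_1$ are in hand.
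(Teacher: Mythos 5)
Your proof is correct and follows the same skeleton as the paper's: keep the universe and valuation, take $\mu$ from Lemma \ref{mu}, and let condition (4) of Definition \ref{topmod} carry the hard direction of the modal case. The genuine difference is your definition of $\mathcal{F}$, and it is a repair rather than a cosmetic change. The paper sets $\mathcal{F}_w = \mathcal{N}_w$ outright and then asserts that for $w \in \bigcup\mu$ every $X \in \mathcal{N}_w$ is automatically open (``$X \in \mathcal{N}_{w}$, $w \in X$ and thus $X \in \mu$''). That assertion fails in general: take $W = \{a,b,c\}$ with $\mathcal{N}_a$ the family of all supersets of $\{a\}$ and $\mathcal{N}_b = \mathcal{N}_c$ the family of all supersets of $\{b,c\}$; all four conditions of Definition \ref{topmod} are satisfied, yet $\{a,b\} \in \mathcal{N}_a$ is not $\mu$-open because $\{a,b\} \notin \mathcal{N}_b$, so the paper's $\mathcal{F}_a$ violates the first bullet of Definition \ref{gtmod}, which demands that every member of $\mathcal{F}_w$ be an open set containing $w$ whenever $w \in \bigcup\mu$. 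Your choice $\mathcal{F}_w = \mathcal{N}_w \cap \mu$ makes the admissibility check go through exactly as you present it, and the pointwise equivalence survives because condition (4) lets you shrink any witnessing neighbourhood $X$ to the open set $X^{*} = \{z \in W_1; X \in \mathcal{N}_z\} \in \mathcal{N}_w$ --- the same device the paper itself uses when it proves $\mathcal{N}^{\mu}_{w} = \mathcal{N}_{w}$. You are also right that condition (3) is idle in your argument; the paper needs it only for the inclusion $\mathcal{N}^{\mu}_{w} \subseteq \mathcal{N}_{w}$, which your construction bypasses entirely. In short: same route at heart, but your version of $\mathcal{F}$ is the one that actually lands inside the class of \gtf-models, so your write-up is not merely an alternative but a correction of the published argument.
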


\begin{proof}
Let $W_\mu = W, V_\mu = V$ and for every $w \in W$, $W \supseteq X \in \mathcal{F}_{w} \Leftrightarrow X \in \mathcal{N}_{w}$. Define $\mu$ as in Lemma \ref{mu}. Then $\bigcup \mu = \bigcup \mathcal{N}_{w}$. Hence, $\mathcal{F}$ is indeed function from $W_\mu$ into $P(P(\bigcup \mu))$. 

Now suppose that $w \in \bigcup \mu$ and $X \in \mathcal{F}_{w}$. Hence, $X \in \mathcal{N}_{w}$, $w \in X$ and thus $X \in \mu$. On the other side, if $w \in X$ and $X \in \mu$, then $X \in \mathcal{N}_{w} = \mathcal{F}_{w}$. Now let us take $w \in W \setminus \bigcup \mu = W \setminus \bigcup \mathcal{N}$ and suppose that $X \in \mathcal{F}_{w} = \mathcal{N}_{w}$. Then clearly $X \subseteq \bigcup \mu$ and meta-implication "$w \in X \Rightarrow X \in \mathcal{N}_{w}$" becomes true because it is trivial. Thus we checked all expected properties of $\mathcal{F}$.

Let us define $\mathcal{N}^{\mu}_{w}$ for each $w \in W$ like in Def. \ref{gennei}. Then $\mathcal{N}^{\mu}_{w} = \mathcal{N}_{w}$.

$(\subseteq)$ Assume that $X \in \mathcal{N}^{\mu}_{w}$. Then $X \subseteq \bigcup \mu$. Hence, $X \subseteq \bigcup \mathcal{N}$. Moreover, there is $\mathcal{O}_{w} \in \mathcal{F}_{w}$ such that $\mathcal{O}_{w} \subseteq X$. Hence, $\mathcal{O}_{w} \in \mathcal{N}_{w}$. But then, by means of restriction 3 from Def. \ref{topmod}, $X \in \mathcal{N}_{w}$. 

$(\supseteq)$ Assume that $X \in \mathcal{N}_{w}$. Then $X \subseteq \bigcup \mathcal{N} = \bigcup \mu$. Let us consider $\mathcal{O}_{w} = \{v \in \bigcup \mathcal{N}; X \in \mathcal{N}_{v}\}$. Because of restriction 4 from Def. \ref{topmod}, $\mathcal{O}_{w} \in \mathcal{N}_{w}$, hence $\mathcal{O}_{w} \in \mathcal{F}_{w}$. If $z \in \mathcal{O}_{w}$ (which implies in particular that $z \in \bigcup \mathcal{N}$), then $z \in \bigcap \mathcal{N}_{z} \subseteq X$. Thus $\mathcal{O}_{w} \subseteq X$. 

Now we can think about pointwise equivalency. We are concentrated only on the modal case. Assume that $w \Vdash \Box \varphi$. Hence, there is $X \in \mathcal{N}_{w}$ such that for each $v \in X, v \Vdash \varphi$. From earlier considerations we have that $X \in \mathcal{N}^{\mu}_{w}$, so there is (open) $\mathcal{O}_{w} \in \mathcal{F}_{w}$ such that $\mathcal{O}_{w} \subseteq X \subseteq V(\varphi) = V_\mu(\varphi)$. Thus, $w \Vdash_\mu \Box \varphi$. The other direction, starting from the assumption that $w \Vdash_\mu \Box \varphi$, is similar.

\end{proof}

The next theorem states that reverse transformation is also possible.

\begin{tw}
Let $M_\mu = \langle W_\mu, \mu, \mathcal{F}, V_\mu \rangle$ be a \gtf-model. Then there exists pointwise equivalent \gtn-model $M = \langle W, \mathcal{N}^{\mu}, V \rangle$. 

\end{tw}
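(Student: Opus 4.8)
The plan is to make the obvious choice $W = W_\mu$, $V = V_\mu$, and to let $\mathcal{N}^\mu$ be exactly the family of generalized topological neighbourhoods from Definition \ref{gennei}, so $\mathcal{N}^\mu_w = \{X \subseteq \bigcup\mu : \text{there is } O_w \in \mathcal{F}_w \text{ with } O_w \subseteq X\}$. The work then splits into verifying that $\langle W, \mathcal{N}^\mu, V\rangle$ satisfies the four clauses of Definition \ref{topmod}, and checking the pointwise equivalence of the modal clause. Before touching the clauses I would record the basic identity $\bigcup\mathcal{N}^\mu = \bigcup\mu$: the inclusion $\subseteq$ is immediate since every neighbourhood is a subset of $\bigcup\mu$ by construction, and for $\supseteq$, given $w \in \bigcup\mu$ one picks an open $O \ni w$, notes $O \in \mathcal{F}_w$ by the first clause of Definition \ref{gtmod}, whence $O \in \mathcal{N}^\mu_w$ and $w \in O \subseteq \bigcup\mathcal{N}^\mu$. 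With this identity in hand, clause (1) is merely the defining convention for $\bigcup\mathcal{N}$, and clause (3) is immediate: if $O_w \in \mathcal{F}_w$ witnesses $X \in \mathcal{N}^\mu_w$ and $X \subseteq Y \subseteq \bigcup\mu$, the very same $O_w$ witnesses $Y \in \mathcal{N}^\mu_w$.

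For clause (2) it is enough to show $\bigcup\mu \subseteq W_1$ and $W_\mu \setminus \bigcup\mu \subseteq W_2$, which together give $W = W_1 \cup W_2$. If $w \in \bigcup\mu$ and $X \in \mathcal{N}^\mu_w$, then the witnessing $O_w \in \mathcal{F}_w$ satisfies $w \in O_w$ by the first clause of Definition \ref{gtmod}, so $w \in O_w \subseteq X$; as $X$ was arbitrary, $w \in \bigcap\mathcal{N}^\mu_w$, i.e. $w \in W_1$. Conversely, any $w$ outside $\bigcup\mu = \bigcup\mathcal{N}^\mu$ lies in $W_2$ by definition.

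The crux is clause (4): from $X \in \mathcal{N}^\mu_w$ one must deduce that $A := \{z \in W_1 : X \in \mathcal{N}^\mu_z\}$ belongs to $\mathcal{N}^\mu_w$. Fix a witness $O_w \in \mathcal{F}_w$ with $O_w \subseteq X$, and note $O_w$ is open. The key claim is $O_w \subseteq A$: for $z \in O_w$ we have $z \in \bigcup\mu$, hence $z \in W_1$, and since $O_w$ is an open set containing $z$ the first clause of Definition \ref{gtmod} gives $O_w \in \mathcal{F}_z$; as $O_w \subseteq X \subseteq \bigcup\mu$ this yields $X \in \mathcal{N}^\mu_z$, so $z \in A$. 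One also checks $A \subseteq \bigcup\mu$, because each $z \in A$ satisfies $z \in W_1$ and $X \in \mathcal{N}^\mu_z$, forcing $z \in \bigcap\mathcal{N}^\mu_z \subseteq X \subseteq \bigcup\mu$. Then $O_w \in \mathcal{F}_w$ with $O_w \subseteq A \subseteq \bigcup\mu$ witnesses $A \in \mathcal{N}^\mu_w$. I expect this to be the main obstacle, since it is precisely the step where both defining conditions on $\mathcal{F}$ and the identification of the relevant worlds with $\bigcup\mu$ must be used together.

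Finally, the pointwise equivalence reduces to the modal clause, the propositional cases being identical since $W = W_\mu$ and $V = V_\mu$ (so by the induction hypothesis the truth sets agree at every world). If $w \Vdash_\mu \Box\varphi$ via some $O_w \in \mathcal{F}_w$ with $O_w \subseteq V(\varphi)$, then $X := O_w$ lies in $\mathcal{N}^\mu_w$ and $X \subseteq V(\varphi)$, so $w \Vdash \Box\varphi$ in the \gtn-model. Conversely, if $w \Vdash \Box\varphi$ there is $X \in \mathcal{N}^\mu_w$ with $X \subseteq V(\varphi)$, and by definition of $\mathcal{N}^\mu_w$ some $O_w \in \mathcal{F}_w$ with $O_w \subseteq X \subseteq V(\varphi)$, whence $w \Vdash_\mu \Box\varphi$. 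This completes the transformation.
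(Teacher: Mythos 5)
Your proposal is correct and takes essentially the same route as the paper: the identical choices $W = W_\mu$, $V = V_\mu$, $\mathcal{N}^\mu$ as in Definition \ref{gennei}, the same preliminary identity $\bigcup \mathcal{N}^{\mu} = \bigcup \mu$, and the same verification of the \gtn-clauses and of the modal equivalence. If anything, your handling of clause (4) is more complete than the paper's, which for $S = \{z \in W_1 : X \in \mathcal{N}^{\mu}_{z}\}$ only establishes $S \subseteq X \subseteq \bigcup \mu$ and omits the witnessing step $\mathcal{O}_{w} \subseteq S$ (every $z \in \mathcal{O}_{w}$ lies in $\bigcup \mu$, hence in $W_1$, and has $\mathcal{O}_{w} \in \mathcal{F}_{z}$, so $X \in \mathcal{N}^{\mu}_{z}$) that is needed to actually conclude $S \in \mathcal{N}^{\mu}_{w}$ --- exactly the step you supply.
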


\begin{proof}

As earlier, assume that $W = W_\mu$ and $V_\mu = V$. We must establish neighbourhoods in our topological setting. But we can do it exactly in the same way as in the preceding theorems and definitions (recall Def. \ref{gennei}). Now me must check that $\mathcal{N}^{\mu}$ satisfies all the properties of neighbourhoods in the sense of \gtn-frames. 

First, let us show that $\bigcup \mu = \bigcup \mathcal{N}^{\mu}$. 

($\subseteq$): If $w \in \bigcup \mu$, then there is $\mathcal{O}_{w} \in \mu$ such that $w \in \mathcal{O}_{w}$. In particular, $\mathcal{O}_{w} \in \mathcal{F}_{w}$. Hence, by means of Def. \ref{gennei}, $\mathcal{O}_{w} \in \mathcal{N}^{\mu}_{w}$. 

($\supseteq$): Assume that $w \in \bigcup \mathcal{N}^{\mu}$. Hence, there are $v \in W, X \in \mathcal \mathcal{N}^{\mu}_{v}$ such that $w \in X$. But by the definition, $X \subseteq \bigcup \mu$. 

Now suppose that $w \in W, X \in \mathcal{N}^{\mu}_{w}$ and $X \subseteq Y \subseteq \bigcup \mathcal{N}^{\mu}$. Hence, there is $\mathcal{O}_{w} \in \mathcal{F}_{w}$ such that $\mathcal{O}_{w} \subseteq X \subseteq \bigcup \mu$.  But also $Y \subseteq \bigcup \mu$ and of course $\mathcal{O}_{w} \subseteq Y$. Hence, $Y \in \mathcal{N}^{\mu}_{w}$. 

Assume that $X \in \mathcal{N}^{\mu}_{w}$ and consider $S = \{z \in W_1; X \in \mathcal{N}^{\mu}_{z}\}$. There is $\mathcal{O}_{w} \in \mathcal{F}_{w}$ such that $\mathcal{O}_{w} \subseteq X \subseteq \bigcup \mu$. However, $S \subseteq X$: take $v \in S$ and recall the fact that $v \in \bigcap \mathcal{N}^{\mu}_{v}$, hence $v \in X$. Moreover, $S$ contains only (some) points from $W_1$, so $S \subseteq \bigcup \mu$. 

Now we may go to the pointwise equivalency. Suppose that $w \Vdash_\mu \Box \varphi$. Hence, there is $\mathcal{O}_{w} \in \mathcal{F}_{w}$ such that $\mathcal{O}_{w} \subseteq V_\mu(\varphi) = V(\varphi)$ (in the last equation we used induction hypothesis). Then $\mathcal{O}_{w} \in \mathcal{N}^{\mu}_{w}$. Hence, $w \Vdash \Box \varphi$. The other direction, starting from the assumption that $w \Vdash \Box \varphi$, is similar.

\end{proof}

\subsection{Axioms and rules}

Unfortunately, we do not have complete axiomatization of the logic induced by our semantics. Certainly, axioms \maxx and \four are true (i.e. they are satisfied in each world independently of valuation). On the other hand, axiom \tax does not hold: it is easy to build counter-model with at least one world $w \in W \setminus \bigcup \mu$ which satisfies $\Box \varphi$ but does not accept $\varphi$. 

We can list several regularities:

\begin{lem}In each \gt-model $M_\mu = \langle W_\mu, \mu, \mathcal{F}, V_\mu \rangle$ the following holds:

\begin{enumerate}
\item If $w \Vdash_\mu \varphi$ for each $w \in \bigcup \mu$, $v \in W \setminus \bigcup \mu$ and $\emptyset \notin \mathcal{F}_{v}$, then $v \Vdash_\mu \Diamond \varphi$, where $\Diamond \varphi$ is a shortcut for $\lnot \Box \lnot \varphi$. 

\item If $w \Vdash_\mu \varphi$ for each $w \in \bigcup \mu$, $v \in W \setminus \bigcup \mu$ and $\mathcal{F}_{v} \neq \emptyset$, then $v \Vdash_\mu \Box \varphi$.

\item If for each $w \in W$, $\mathcal{F}_{w} \neq \emptyset$ and if $\varphi$ holds in each world from $\bigcup\mu$, then $\Box \varphi$ holds in each world from $W$. Consequently, standard \nec also becomes true with this assumptions. 

\item If $v \in W \setminus \bigcup \mu, w \in \bigcup \mu$ and $\mathcal{F}_{v} = \mathcal{F}_{w}$, then $v \Vdash_\mu \varphi \Leftrightarrow w \Vdash \varphi$ for any $\varphi$ from the set $MOD$ which is built in the following way: \textbf{i)} for each formula $\gamma$, $\Box \gamma \in MOD$, \textbf{ii)} $\alpha \lor \beta, \alpha \land \beta, \alpha \rightarrow \beta$ are all in $MOD$, where $\alpha, \beta$ are already in $MOD$. 

\end{enumerate}
\end{lem}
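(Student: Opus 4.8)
The plan is to exploit a single structural fact that underlies all four items: if $v \in W_\mu \setminus \bigcup \mu$, then by the second defining clause of $\mathcal{F}$ every member $\mathcal{O}_v \in \mathcal{F}_v$ satisfies $\mathcal{O}_v \in \mu$, and hence $\mathcal{O}_v \subseteq \bigcup \mu$. Thus every point ``seen'' through a neighbourhood of an orphaned world already lies inside $\bigcup \mu$. Combined with the hypothesis that $\varphi$ holds throughout $\bigcup \mu$, this renders the verification of $\Box$ and $\Diamond$ at orphaned worlds almost immediate.

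For item 1 I would argue by contradiction. Suppose $v \nVdash_\mu \Diamond \varphi$, i.e.\ $v \Vdash_\mu \Box \lnot \varphi$. Then there is $\mathcal{O}_v \in \mathcal{F}_v$ with $u \nVdash_\mu \varphi$ for every $u \in \mathcal{O}_v$. Since $\emptyset \notin \mathcal{F}_v$ we have $\mathcal{O}_v \neq \emptyset$, so we may pick $u \in \mathcal{O}_v$; by the structural fact $u \in \bigcup \mu$, whence $u \Vdash_\mu \varphi$ by hypothesis, a contradiction. The condition $\emptyset \notin \mathcal{F}_v$ serves exactly to guarantee a witness point, and this is the only place where any delicacy enters. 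Item 2 is the direct counterpart: since $\mathcal{F}_v \neq \emptyset$ I pick any $\mathcal{O}_v \in \mathcal{F}_v$; by the structural fact $\mathcal{O}_v \subseteq \bigcup \mu$, so every $u \in \mathcal{O}_v$ forces $\varphi$, which is precisely the witness required for $v \Vdash_\mu \Box \varphi$.

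Item 3 then follows by splitting on whether $w \in \bigcup \mu$. If $w \in \bigcup \mu$, then $\mathcal{F}_w = \{X \in \mu : w \in X\}$ is nonempty and each such $X$ is contained in $\bigcup \mu$, so any of them witnesses $w \Vdash_\mu \Box \varphi$; if $w \notin \bigcup \mu$, I apply item 2 using $\mathcal{F}_w \neq \emptyset$. Soundness of \nec is then immediate: if $\varphi$ is valid in the model it holds in particular on all of $\bigcup \mu$, so by the previous sentence $\Box \varphi$ holds at every world.

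Finally, for item 4 I would use structural induction on the generation of $MOD$. The base case $\varphi = \Box \gamma$ is the crux: whether $x \Vdash_\mu \Box \gamma$ holds depends only on $\mathcal{F}_x$ and on the (fixed, world-independent) truth set of $\gamma$, so the hypothesis $\mathcal{F}_v = \mathcal{F}_w$ yields $v \Vdash_\mu \Box \gamma \Leftrightarrow w \Vdash_\mu \Box \gamma$ at once. For the inductive steps $\alpha \land \beta$, $\alpha \lor \beta$, $\alpha \rightarrow \beta$ the equivalence is transported through the Boolean clauses of $\Vdash_\mu$ using the induction hypotheses on $\alpha$ and $\beta$. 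The reason propositional variables are deliberately excluded from the atoms of $MOD$ is that $v$ and $w$ may be separated by the valuation $V_\mu$; the agreement is forced only by the shared neighbourhood family $\mathcal{F}_v = \mathcal{F}_w$, which affects modal atoms alone. I expect no serious obstacle beyond keeping the base case honest and remembering the nonemptiness subtlety in item 1.
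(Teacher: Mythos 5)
Your proof is correct. Note that the paper states this lemma without any proof at all, so there is nothing to compare against; your argument — resting on the single structural fact that for an orphaned world $v \in W \setminus \bigcup \mu$ every member of $\mathcal{F}_v$ is open and hence contained in $\bigcup \mu$, plus the observation that forcing of $\Box\gamma$ at a world depends only on its $\mathcal{F}$-family and the truth set of $\gamma$ — is the natural verification the author evidently considered routine, and it correctly identifies the two genuine subtleties (the nonemptiness condition $\emptyset \notin \mathcal{F}_v$ in item 1, and the exclusion of propositional variables from $MOD$ in item 4).
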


Of course there are two specific cases in which it is possible to obtain completeness quite easily:

\textbf{1}. The case of strong frames, i.e. those in which $W = \bigcup \mu$. In this situation our \gt-model corresponds to the strong \gtn-model with $W_2 = \emptyset$. As we can conclude from \cite{indrze}, this class coincides with logic \mntf (i.e. there is a completeness result). 

\textbf{2}. The case of frames in which $\mathcal{F}_{w} = \emptyset$ for each $w \in W \setminus \bigcup \mu$. Here we have (see \cite{jarvi}) completeness with respect to the logic \mtf. In this situation we can consider elements of $W \setminus \bigcup \mu$ as so-called \emph{impossible worlds}: worlds in which "everyting is possible and nothing is necessary" (that is, $\Diamond \varphi$ holds for any $\varphi$ and $\Box \varphi$ fails for any $\varphi$). It is noteworthy that in \cite{jarvi} the authors have shown that \mtf can be embedded in their ”Information Logic of Galois Connections” (\textbf{ILGC}). This is a connection between non-normal modal logics and the theory of rough sets or approximate reasoning. 

\section{Generalized topo-bisimulations}

In this section we introduce three notions of generalized topo-bisimulation between two \gtf-models. They are based on the topo-bisimulation presented e.g. by Aiello et al. in \cite{aiello}. However, this standard definition was adapted to our needs.

\begin{df}
Assume that $M_1 = \langle W_\mu, \mu, \mathcal{F}^{\mu}, V_\mu \rangle$ and $M_2 = \langle W_\tau, \tau, \mathcal{F}^{\tau}, V_\tau \rangle$ are two \gtf-models. We define generalized $0$-topo-bisimulation as a non-empty relation $T \subseteq W \times W'$ such that if $w T w'$ (where $w \in W, w' \in W'$), then:

\begin{enumerate}
\item $w \Vdash_\mu q \Leftrightarrow w' \Vdash_\tau q$ for any $q \in PV$.
\item If $w \in \mathcal{O}_{w} \in \mu$, then there is $\mathcal{O}_{w'} \in \tau$ such that for each $v' \in \mathcal{O}_{w'}$ there exists $v \in \mathcal{O}_{w}$ such that $v T v'$.
\item If $w' \in \mathcal{O}_{w'} \in \tau$, then there is $\mathcal{O}_{w} \in \mu$ such that for each $v \in \mathcal{O}_{w}$ there exists $v' \in \mathcal{O}_{w'}$ such that $v T v'$.
\end{enumerate}

\end{df}

Such function is useful mostly for these worlds which are somewhere in $\bigcup \mu$. The next notion is more general:

\begin{df}
Assume that $M_1 = \langle W_\mu, \mu, \mathcal{F}^{\mu}, V_\mu \rangle$ and $M_2 = \langle W_\tau, \tau, \mathcal{F}^{\mu}, V_\tau \rangle$ are two \gtf-models. We define generalized $1$-topo-bisimulation as a non-empty relation $T \subseteq W \times W'$ such that if $w T w'$ (where $w \in W, w' \in W'$), then:

\begin{enumerate}
\item $w \Vdash_\mu q \Leftrightarrow w' \Vdash_\tau q$ for any $q \in PV$.
\item If $w \in \mathcal{O}^{-1}_{w}$, where $\emptyset \neq \mathcal{O}_{w} \in \mu$, then there is $\mathcal{O}_{w'} \in \mathcal{F}^{\tau}_{w'}$ such that for each $v' \in \mathcal{O}_{w'}$ there exists $v \in \mathcal{O}_{w}$ such that $v T v'$.
\item If $w' \in \mathcal{O}^{-1}_{w'}$, where $\emptyset \neq \mathcal{O}_{w'} \in \tau$, then there is $\mathcal{O}_{w} \in \mathcal{F}^{\mu}_{w}$ such that for each $v \in \mathcal{O}_{w}$ there exists $v' \in \mathcal{O}_{w'}$ such that $v T v'$.
\end{enumerate}
\end{df}

The third notion seems to be the most vague:

\begin{df}
Assume that $M_1 = \langle W_\mu, \mu, \mathcal{F}^{\mu}, V_\mu \rangle$ and $M_2 = \langle W_\tau, \tau, \mathcal{F}^{\mu}, V_\tau \rangle$  are two \gtf-models. We define generalized $2$-topo-bisimulation as a non-empty relation $T \subseteq W \times W'$ such that if $w T w'$ (where $w \in W, w' \in W'$), then:

\begin{enumerate}
\item $w \Vdash_\mu q \Leftrightarrow w' \Vdash_\tau q$ for any $q \in PV$
\item If $w \in \mathcal{O}^{-1}_{w}$, where $\emptyset \neq \mathcal{O}_{w} \in \mu$, then there is $\mathcal{O}_{w'} \in \mathcal{F}^{\tau}_{w'}$ such that for each $v' \in \mathcal{O}^{-1}_{w'}$ there exists $v \in \mathcal{O}^{-1}_{w}$ such that $v T v'$.
\item If $w' \in \mathcal{O}^{-1}_{w'}$, where $\emptyset \neq \mathcal{O}_{w'} \in \tau$, then there is $\mathcal{O}_{w} \in \mathcal{F}^{\tau}_{w}$ such that for each $v \in \mathcal{O}^{-1}_{w}$ there exists $v' \in \mathcal{O}^{-1}_{w'}$ such that $v T v'$.
\end{enumerate}
\end{df}

Let us introduce some basic definitions concerning functions. The notions of (ordinary) continuity and openess are taken from \cite{csaszar}:

\begin{df}
Assume that $F_1 = \langle W_\mu, \mu, \mathcal{F}^{\mu} \rangle$ and $F_2 = \langle W_\tau, \tau, \mathcal{F}^{\tau} \rangle$ are two generalized topological frames and $f$ is a function from $W_\mu$ into $W_\tau$. We say that $f$ is:

\begin{itemize}
\item continuous $\Leftrightarrow$ $\left[G' \in \tau \Rightarrow f^{-1}(G') \in \mu \right]$

\item open $\Leftrightarrow$ $f(G) \in \tau \text{ for each } G \in \mu$.

\item $\mathcal{F}$-continuous $\Leftrightarrow$ $\left[G' \in \mathcal{F}^{\tau}_{w'} \Rightarrow f^{-1}(G') \in \mathcal{F}^{\mu}_{w} \right]$ for any $w \in W_\mu, w' \in W_\tau$ such that $f(w) = w'$. 

\item $\mathcal{F}$-open $\Leftrightarrow$ $f(G) \in \mathcal{F}^{\tau}_{w'} \text{ for each } G \in \mathcal{F}^{\mu}_{w}$ for any $w \in W_\mu, w' \in W_\tau$ such that $f(w) = w'$.

\end{itemize}

\end{df}

The following two theorems give us our expected relationship beetween bisimulations and functions introduced above.

\begin{tw}
\label{zerob}
Assume that $F_1 = \langle W_\mu, \mu, \mathcal{F}^{\mu} \rangle$ is a \gtf-frame and $M_2 = \langle W_\tau, \tau, \mathcal{F}^{\tau}, V_\tau \rangle$ is a \gtf-model. Suppose that $f$ is a continuous and open map between $W_\mu$ and $W_\tau$; and for any $q \in PV$ we set $V_\mu(q) = f^{-1}(V_\tau(q))$. Then $f$ is a $0$-topo-bisimulation between $M_1 = \langle W_\mu, \mu, \mathcal{F}^{\mu}, V_\mu \rangle $ and $M_2$. 
\end{tw}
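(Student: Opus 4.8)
The plan is to take for $T$ the graph of $f$, i.e.\ to set $w\,T\,w' \Leftrightarrow f(w)=w'$; saying that ``$f$ is a $0$-topo-bisimulation'' should be read as saying that this graph relation is one. Since $W_\mu$ is non-empty (it is a universe of a g.t.s.) and $f$ is total, the relation is non-empty, so it remains only to verify the three clauses of the definition for an arbitrary pair with $f(w)=w'$.

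For clause $(1)$, the atomic harmony is immediate from the hypothesis on valuations: for $q \in PV$ we have $w \Vdash_\mu q \Leftrightarrow w \in V_\mu(q) = f^{-1}(V_\tau(q)) \Leftrightarrow f(w) \in V_\tau(q) \Leftrightarrow w' \Vdash_\tau q$. For clause $(2)$ (the ``forth'' condition) I would use openness of $f$. Suppose $w \in \mathcal{O}_{w} \in \mu$. Put $\mathcal{O}_{w'} := f(\mathcal{O}_{w})$; by openness $\mathcal{O}_{w'} \in \tau$, and $w' = f(w) \in \mathcal{O}_{w'}$ because $w \in \mathcal{O}_{w}$. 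Now any $v' \in \mathcal{O}_{w'} = f(\mathcal{O}_{w})$ is, by definition of the direct image, of the form $v' = f(v)$ for some $v \in \mathcal{O}_{w}$, and then $v\,T\,v'$ by construction of $T$; this is exactly what clause $(2)$ demands.

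For clause $(3)$ (the ``back'' condition) I would symmetrically use continuity of $f$. Suppose $w' \in \mathcal{O}_{w'} \in \tau$. Put $\mathcal{O}_{w} := f^{-1}(\mathcal{O}_{w'})$; by continuity $\mathcal{O}_{w} \in \mu$, and $w \in \mathcal{O}_{w}$ since $f(w) = w' \in \mathcal{O}_{w'}$. For any $v \in \mathcal{O}_{w}$ we have $f(v) \in \mathcal{O}_{w'}$, so taking $v' := f(v)$ yields $v' \in \mathcal{O}_{w'}$ together with $v\,T\,v'$, as required.

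I do not expect any genuine obstacle here: each clause reduces to an elementary property of direct or inverse images, and the only thing to keep straight is the pairing of the ``forth'' clause with openness and of the ``back'' clause with continuity. The sole point that deserves a word of care is that the witnessing sets $f(\mathcal{O}_{w})$ and $f^{-1}(\mathcal{O}_{w'})$ actually contain the relevant worlds $w'$ and $w$; both follow automatically from $w \in \mathcal{O}_{w}$ and $w' \in \mathcal{O}_{w'}$ respectively. Note finally that $\mathcal{F}^{\mu}$ and $\mathcal{F}^{\tau}$ play no role in the argument, which matches the earlier remark that the $0$-topo-bisimulation is tailored to worlds lying inside $\bigcup \mu$.
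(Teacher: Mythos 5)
Your proof is correct and takes essentially the same route as the paper: the relation is the graph of $f$, the forth clause is witnessed by the direct image $f(\mathcal{O}_{w})$ via openness, and the back clause by the preimage $f^{-1}(\mathcal{O}_{w'})$ via continuity (the paper states only the atomic clause explicitly and defers the other two to the proof of its next theorem, where exactly your argument appears). If anything, your handling of the atomic clause, via $w \in f^{-1}(V_\tau(q)) \Leftrightarrow f(w) \in V_\tau(q)$, is cleaner than the paper's, which invokes the identities $f(f^{-1}(V_\tau(q))) = V_\tau(q)$ and $f^{-1}(f(V_\mu(q))) = V_\mu(q)$, valid in general only for surjective, respectively injective, $f$.
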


\begin{proof}
We shall present only the first part of the proof. Two other parts will are actually contained in the proof of the next theorem.

Let $f(w) = w'$, where $w \in W_\mu, w' \in W_\tau$.  First, assume that $w \Vdash_\mu q$, so $w \in V_\mu(q) = f^{-1}(V_\tau(q))$. We know that $f(w) = w' \in f(f^{-1}(V_\tau(q))) = V_\tau(q)$. Hence $w' \Vdash_\tau q$. 

Now suppose that $w' \Vdash_\tau q$. Hence, $w' \in V_\tau(q) = f(f^{-1}(V_\tau(q)) = f(V_\mu(q))$. Moreover, $w \in f^{-1}(\{w'\})$. From set theory we know that if $\{w'\} \subseteq f(V_\mu(q))$ (which is true), then $f^{-1}(\{w'\}) \subseteq f^{-1}(f(V_\mu(q)) = V_\mu(q)$. Hence, $w \Vdash_\mu q$. 


\end{proof}

\begin{tw}
Assume that $F_1 = \langle W_\mu, \mu, \mathcal{F}^{\mu} \rangle$ is a \gtf-frame and $M_2 = \langle W_\tau, \tau, \mathcal{F}^{\tau}, V_\tau \rangle$ is a \gtf-model. Suppose that $f$ is a $\mathcal{F}$-continuous and $\mathcal{F}$-open map between $W_\mu$ and $W_\tau$; and for any $q \in PV$ we set $V_\mu(q) = f^{-1}(V_\tau(q))$.  Then $f$ is a $1$-topo-bisimulation between $M_1 = \langle W_\mu, \mu, \mathcal{F}^{\mu}, V_\mu \rangle$ and $M_2$.
\end{tw}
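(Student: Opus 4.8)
The plan is to check the three clauses of the definition of $1$-topo-bisimulation for the graph relation $T = \{(w, f(w)); w \in W_\mu\}$, which is non-empty since $W_\mu \neq \emptyset$. Throughout I fix $w \in W_\mu$ and write $w' = f(w)$, so that $w T w'$. The first move is purely notational: I read the hypothesis $w \in \mathcal{O}^{-1}_{w}$ as $\mathcal{O}_{w} \in \mathcal{F}^{\mu}_{w}$ (and symmetrically $w' \in \mathcal{O}^{-1}_{w'}$ as $\mathcal{O}_{w'} \in \mathcal{F}^{\tau}_{w'}$), so that the premises of clauses 2 and 3 become exactly the premises of $\mathcal{F}$-openness and $\mathcal{F}$-continuity.

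The atomic clause (1) is handled precisely as in Theorem \ref{zerob}: since $V_\mu(q) = f^{-1}(V_\tau(q))$, the chain $w \Vdash_\mu q \Leftrightarrow w \in f^{-1}(V_\tau(q)) \Leftrightarrow f(w) \in V_\tau(q) \Leftrightarrow w' \Vdash_\tau q$ gives the equivalence at once. For the forth clause (2), I suppose $\mathcal{O}_w \in \mathcal{F}^{\mu}_w$ with $\emptyset \neq \mathcal{O}_w \in \mu$. Since $f$ is $\mathcal{F}$-open and $f(w) = w'$, I set $\mathcal{O}_{w'} := f(\mathcal{O}_w)$ and obtain $\mathcal{O}_{w'} \in \mathcal{F}^{\tau}_{w'}$ directly. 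Any $v' \in \mathcal{O}_{w'} = f(\mathcal{O}_w)$ is, by the definition of image, of the form $f(v)$ for some $v \in \mathcal{O}_w$, and this $v$ witnesses $v T v'$, which is exactly what clause 2 requires.

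For the back clause (3), I suppose $\mathcal{O}_{w'} \in \mathcal{F}^{\tau}_{w'}$ with $\emptyset \neq \mathcal{O}_{w'} \in \tau$. Since $f$ is $\mathcal{F}$-continuous and $f(w) = w'$, I set $\mathcal{O}_w := f^{-1}(\mathcal{O}_{w'})$ and obtain $\mathcal{O}_w \in \mathcal{F}^{\mu}_w$. Then for any $v \in \mathcal{O}_w = f^{-1}(\mathcal{O}_{w'})$ we have $f(v) \in \mathcal{O}_{w'}$, so taking $v' := f(v)$ gives $v' \in \mathcal{O}_{w'}$ together with $v T v'$, as demanded.

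I do not expect a serious obstacle here: the definitions of $\mathcal{F}$-open and $\mathcal{F}$-continuous are tailored so that the push-forward $f(\mathcal{O}_w)$ supplies the witness neighbourhood for the forth clause and the pull-back $f^{-1}(\mathcal{O}_{w'})$ supplies it for the back clause. The only points that genuinely require care are, first, translating the superscript notation $\mathcal{O}^{-1}$ into the correct membership statement so the premises align with the domains of $\mathcal{F}$-openness and $\mathcal{F}$-continuity, and second, the two elementary set-theoretic facts that every element of an image has a preimage and that a preimage maps back into the original set, which furnish the existential witnesses in clauses 2 and 3.
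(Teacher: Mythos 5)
Your proposal is correct and follows essentially the same route as the paper: clause (1) via the preimage definition of $V_\mu$ (as in the $0$-bisimulation theorem), clause (2) by taking $\mathcal{O}_{w'} = f(\mathcal{O}_w)$ from $\mathcal{F}$-openness, and clause (3) by taking $\mathcal{O}_w = f^{-1}(\mathcal{O}_{w'})$ from $\mathcal{F}$-continuity, with the same elementary image/preimage facts supplying the witnesses. Your explicit unpacking of $w \in \mathcal{O}^{-1}_{w}$ as $\mathcal{O}_{w} \in \mathcal{F}^{\mu}_{w}$ is exactly the reading the paper uses.
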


\begin{proof}
Let $f(w) = w'$, where $w \in W_\mu, w' \in W_\tau$. The first part of the proof is exactly the same as in Theorem \ref{zerob}. 

As for the second one, suppose that $w \in \mathcal{O}^{-1}_{w}$, i.e. $\mathcal{O}_{w} \in \mathcal{F}_{w}$. We can say that $f(\mathcal{O}_{w}) \in \mathcal{F}_{w'}$ (because $f$ is $\mathcal{F}$-open). Assume that $v' \in f(\mathcal{O}_{w})$. Of course there must be certain $v \in \mathcal{O}_{w}$ such that $f(v) = v'$. 

Third part is similar. Suppose that $w' \in \mathcal{O}^{-1}_{w'}$, i.e. $\mathcal{O}_{w'} \in \mathcal{F}_{w'}$. We can say that $f^{-1}(\mathcal{O}_{w'}) \in \mathcal{F}_{w}$ (because $f$ is $\mathcal{F}$-continuous). Assume that $v \in f^{-1}(\mathcal{O}_{w'})$. Then $f(v) \in f(f^{-1}(\mathcal{O}_{w'})) = \mathcal{O}_{w'}$. 
\end{proof}

The main benefit of bisimulation is certain kind of "logical similarity" between two models. It has been presented below in two theorems:

\begin{tw}Assume that $M_1 = \langle W_\mu, \mu, \mathcal{F}^{\mu}, V_\mu \rangle$ and $M_2 = \langle W_\tau, \tau, \mathcal{F}^{\mu}, V_\tau \rangle$ are two \gtf-models, $T$ is a $0$-bisimulation between them and there are $w \in \bigcup \mu, w' \in \bigcup \tau$ such that $wTw'$. Then $w$ and $w'$ satisfy the same formulas.
\end{tw}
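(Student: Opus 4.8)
The plan is to argue by induction on the complexity of $\varphi$, but to prove a slightly stronger statement than the one asked for: namely that \emph{for every} pair $(u,u')$ with $u \in \bigcup \mu$, $u' \in \bigcup \tau$ and $uTu'$, one has $u \Vdash_\mu \varphi \Leftrightarrow u' \Vdash_\tau \varphi$. The theorem is then the instance $(u,u') = (w,w')$. The one preliminary observation that makes everything run is drawn from the first clause of Definition \ref{gtmod}: whenever $u \in \bigcup \mu$, the family $\mathcal{F}_u$ is exactly the collection of $\mu$-open sets containing $u$. Hence at such points the $\Box$-clause degenerates to the ordinary topological interior reading, $u \Vdash_\mu \Box \psi$ iff there is an open $\mathcal{O}_u \ni u$ with $v \Vdash_\mu \psi$ for all $v \in \mathcal{O}_u$, and symmetrically in $M_2$. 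This is precisely the shape for which the forth/back clauses of a $0$-topo-bisimulation are tailored.

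The base case $\varphi = q \in PV$ is immediate from clause 1 of the bisimulation. The Boolean cases ($\land,\lor,\rightarrow,\lnot$) I would dispatch directly from the induction hypothesis, since forcing of a Boolean combination at a point is fixed by the forcing of its immediate subformulas at that same point, and no bisimulation clause is needed.

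The substantial case is $\varphi = \Box\psi$. For the left-to-right direction, assuming $u \Vdash_\mu \Box\psi$, the observation above supplies an open neighbourhood $\mathcal{O}_u \ni u$ forcing $\psi$ throughout. Applying clause 2 (the forth condition) to $uTu'$ and this $\mathcal{O}_u$ yields an open set $\mathcal{O}_{u'}$ — a neighbourhood of $u'$ — each of whose points $v'$ is $T$-related to some $v \in \mathcal{O}_u$. Since $v \in \mathcal{O}_u \subseteq \bigcup\mu$ and $v' \in \mathcal{O}_{u'} \subseteq \bigcup\tau$, both points automatically lie in the respective maximal open sets, so the induction hypothesis applies to $(v,v')$ and transports $v \Vdash_\mu \psi$ to $v' \Vdash_\tau \psi$. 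Thus $\mathcal{O}_{u'}$ forces $\psi$ throughout and witnesses $u' \Vdash_\tau \Box\psi$. The right-to-left direction is symmetric, invoking clause 3 (the back condition) in place of clause 2.

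The step I expect to be the genuine obstacle — or at least the point demanding care — is the tacit requirement that the open set $\mathcal{O}_{u'}$ produced by clause 2 actually \emph{contain} $u'$, for only then is $\mathcal{O}_{u'} \in \mathcal{F}_{u'}$ and the conclusion $u' \Vdash_\tau \Box\psi$ licensed. This is exactly what the subscript in the notation $\mathcal{O}_{u'}$ is meant to encode, mirroring the hypothesis $w \in \mathcal{O}_w$ that opens clause 2, and it is the analogue of the standard demand that a forth step return a neighbourhood of the image point. Once this reading is fixed, there is no further difficulty: the membership facts $v \in \bigcup\mu$ and $v' \in \bigcup\tau$ come for free from $v,v'$ sitting inside open sets, so the induction hypothesis is available at precisely the related pairs that arise, and the induction closes.
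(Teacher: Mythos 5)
Your proof is correct and follows essentially the same route as the paper, which does not spell out this case but defers to the $1$-bisimulation theorem: induction on formula complexity, with the forth/back clauses handling $\Box$ and the key observation that for $u \in \bigcup\mu$ the family $\mathcal{F}_u$ is exactly the set of open sets containing $u$. The reading you flag as the crucial point --- that the open set $\mathcal{O}_{w'}$ produced by the forth clause must actually contain $w'$ --- is indeed the intended one (it is explicit in the Aiello et al.\ definition the paper cites, and is encoded via $\mathcal{O}_{w'} \in \mathcal{F}^{\tau}_{w'}$ in the $1$-bisimulation variant); you are right that it is indispensable, since otherwise $\mathcal{O}_{w'} = \emptyset$ would satisfy the clause vacuously and the theorem would fail.
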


\begin{proof}
The proof is similar to the one for the next theorem (which is in some sense more general).


\end{proof}

In the next theorem we speak about \emph{consistent} models (frames). It means that each of their worlds is consistent: $\emptyset \notin \mathcal{F}_{w}$.

\begin{tw}Assume that $M_1 = \langle W_\mu, \mu, \mathcal{F}^{\mu}, V_\mu \rangle$ and $M_2 = \langle W_\tau, \tau, \mathcal{F}^{\tau}, V_\tau \rangle$ are two consistent \gtf-models, $T$ is a $1$-bisimulation between them and there are $w \in W_\mu, w' \in W_\tau$ such that $wTw'$. Then $w$ and $w'$ satisfy the same formulas.
\end{tw}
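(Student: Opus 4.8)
The plan is to argue by induction on the complexity of the formula $\varphi$, establishing that $w \Vdash_\mu \varphi \Leftrightarrow w' \Vdash_\tau \varphi$ whenever $wTw'$. The base case $\varphi = q \in PV$ is immediate from clause (1) of the definition of a $1$-topo-bisimulation. The Boolean cases ($\land$, $\lor$, $\rightarrow$, $\lnot$) I would dispatch routinely: forcing of these connectives is defined truth-functionally at a single world, so each follows directly from the induction hypothesis applied to the immediate subformulas, with no appeal to the generalized topological structure whatsoever.

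The only substantial case is $\varphi = \Box \psi$, and this is where both consistency and clauses (2)--(3) of the bisimulation must enter. I would suppose $w \Vdash_\mu \Box \psi$, so by the forcing definition there is $\mathcal{O}_w \in \mathcal{F}^{\mu}_w$ with $v \Vdash_\mu \psi$ for every $v \in \mathcal{O}_w$. Since every member of $\mathcal{F}^{\mu}_w$ lies in $\mu$, we have $\mathcal{O}_w \in \mu$; moreover $\mathcal{O}_w \in \mathcal{F}^{\mu}_w$ says precisely that $w \in \mathcal{O}^{-1}_w$. Because $M_1$ is consistent, $\emptyset \notin \mathcal{F}^{\mu}_w$, whence $\mathcal{O}_w \neq \emptyset$, and the hypothesis of clause (2) is met. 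That clause then yields $\mathcal{O}_{w'} \in \mathcal{F}^{\tau}_{w'}$ such that every $v' \in \mathcal{O}_{w'}$ admits some $v \in \mathcal{O}_w$ with $vTv'$. For such a pair $v \Vdash_\mu \psi$, so by the induction hypothesis $v' \Vdash_\tau \psi$; hence every point of $\mathcal{O}_{w'}$ forces $\psi$, giving $w' \Vdash_\tau \Box \psi$. The converse direction is entirely symmetric, invoking clause (3) and the consistency of $M_2$ to guarantee that the relevant witness $\mathcal{O}_{w'}$ is non-empty.

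I expect the main obstacle to be conceptual rather than computational, namely recognising exactly why the consistency hypothesis is indispensable. Clauses (2) and (3) carry the side condition $\emptyset \neq \mathcal{O}_w$ (respectively $\emptyset \neq \mathcal{O}_{w'}$), so they simply cannot be applied when the witness for $\Box \psi$ is the empty set. Consistency, i.e. $\emptyset \notin \mathcal{F}_w$, is precisely what excludes this degenerate witness and lets the transfer argument proceed uniformly. Indeed, in an inconsistent model a world could force $\Box \psi$ vacuously through $\emptyset \in \mathcal{F}_w$ while its bisimilar partner need not, so the statement would genuinely fail without this assumption; flagging this dependence is the one point where the proof demands care rather than bookkeeping.
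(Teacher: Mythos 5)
Your proof is correct and follows essentially the same route as the paper's: induction on formula complexity, with the base and Boolean cases dispatched routinely and the modal case handled by clauses (2) and (3) of the $1$-bisimulation together with the induction hypothesis. Your treatment is in fact slightly sharper on one point: you invoke consistency exactly where it is logically indispensable --- to guarantee that the witness $\mathcal{O}_{w} \in \mathcal{F}^{\mu}_{w}$ is non-empty, so that the side condition $\emptyset \neq \mathcal{O}_{w}$ required to apply clause (2) (resp.\ clause (3)) is actually met --- whereas the paper mentions consistency only when picking a point $v' \in \mathcal{O}_{w'}$, a step that would be vacuous (and harmless) if $\mathcal{O}_{w'}$ were empty, while leaving the genuinely essential non-emptiness of $\mathcal{O}_{w}$ implicit.
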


\begin{proof}
The proof goes by induction on the complexity of formulas. If $\varphi := q \in PV$, then our thesis is clear. Boolean cases are simple. 

Assume that $\varphi := \Box \gamma$ and $w \Vdash_\mu \Box \gamma$. There is certain $\mathcal{O}_{w} \in \mu$ such that $\mathcal{O}_{w} \in \mathcal{F}^{\mu}_{w}$ and $\mathcal{O}_{w} \subseteq V_\mu(\gamma)$. Thus, by means of $1$-bisimulation, there is $\mathcal{O}_{w'} \in \mathcal{F}^{\tau}_{w'}$ with expected properties. Let us take $v' \in \mathcal{O}_{w'}$ (we can do it because of consistency of the model). Now we can find $v \in \mathcal{O}_{w}$ such that $vTv'$. By means of induction hypothesis, $v' \Vdash_\tau \gamma$. Then $w' \Vdash_\tau \Box \gamma$.

Now suppose that $w' \Vdash_\tau \Box \gamma$. There is $\mathcal{O}_{w'} \in \tau$ such that $\mathcal{O}_{w'} \in \mathcal{F}^{\tau}_{w'}$ and $\mathcal{O}_{w'} \subseteq V_\tau(\gamma)$. Thus, by means of $1$-bisimulation, there is $\mathcal{O}_{w} \in \mathcal{F}^{\mu}_{w}$ with expected properties. Let us consider $v \in \mathcal{O}_{w}$. Now we can find $v' \in \mathcal{O}_{w'}$ such that $vTv'$. Induction hypothesis allows us to conclude that $v \Vdash_\mu \gamma$. Hence, $w' \Vdash_\mu \Box \gamma$. 

\end{proof}

As for the $2$-bisimulation, we shall not obtain analogous result for $\Box$. However, it is possible to obtain it for new modality, which is in some sense more vague: 

$w \Vdash \bullet \varphi \Leftrightarrow$ there is $\mathcal{O}_{w} \in \mathcal{F}_{w}$ such that for any $v \in \mathcal{O}^{-1}_{w}$, $v \Vdash \varphi$.

Note that if we use this modality, then the axiom scheme $\bullet \varphi \rightarrow \varphi$ becomes true in each \gtf-model. Assume now that we replace $\Box$ by $\bullet$ and let us call corresponding models \bgtf-models. Using this new language, we can prove the theorem below:

\begin{tw}
Assume that $M_1 = \langle W_\mu, \mu, \mathcal{F}^{\mu}, V_\mu \rangle$ and $M_2 = \langle W_\tau, \tau, \mathcal{F}^{\tau}, V_\tau \rangle$ are two consistent \bgtf-models, $T$ is a $2$-bisimulation between them and there are $w \in W_\mu, w' \in W_\tau$ such that $wTw'$. Then $w$ and $w'$ satisfy the same formulas.
\end{tw}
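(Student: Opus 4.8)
The plan is to argue by induction on the complexity of $\varphi$, following exactly the template of the preceding theorem (the $1$-bisimulation case) but with $\Box$ replaced by $\bullet$ and with each witnessing neighbourhood $\mathcal{O}$ replaced by its preimage set $\mathcal{O}^{-1}$. The atomic case $\varphi := q \in PV$ is handled directly by clause~1 of the definition of $2$-bisimulation, and the Boolean connectives $\land, \lor, \rightarrow, \lnot$ are routine: the induction hypothesis transfers satisfaction of the immediate subformulas across $T$, and the forcing clauses for the connectives are identical on both sides. Hence the whole content lies in the case $\varphi := \bullet \gamma$.

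For the forward direction I would suppose $w \Vdash_\mu \bullet \gamma$, so by the semantics of $\bullet$ there is $\mathcal{O}_w \in \mathcal{F}^\mu_w$ with $v \Vdash_\mu \gamma$ for every $v \in \mathcal{O}^{-1}_w$. First I record three facts about this $\mathcal{O}_w$: it is $\mu$-open (since $X \in \mathcal{F}^\mu_w$ always forces $X \in \mu$); it is nonempty, which is precisely where consistency enters, as $\emptyset \notin \mathcal{F}^\mu_w$; and $w \in \mathcal{O}^{-1}_w$, immediate from $\mathcal{O}_w \in \mathcal{F}^\mu_w$ together with the definition $\mathcal{O}^{-1}_w = \{z : \mathcal{O}_w \in \mathcal{F}_z\}$. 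These three facts are exactly the hypotheses needed to invoke clause~2 of the $2$-bisimulation, which yields $\mathcal{O}_{w'} \in \mathcal{F}^\tau_{w'}$ such that every $v' \in \mathcal{O}^{-1}_{w'}$ is $T$-matched by some $v \in \mathcal{O}^{-1}_w$. Taking an arbitrary such $v'$, its match $v$ satisfies $v \Vdash_\mu \gamma$, so by the induction hypothesis $v' \Vdash_\tau \gamma$; as $v'$ ranged over all of $\mathcal{O}^{-1}_{w'}$ and $\mathcal{O}_{w'} \in \mathcal{F}^\tau_{w'}$, this gives $w' \Vdash_\tau \bullet \gamma$. The backward direction is perfectly symmetric, starting from $w' \Vdash_\tau \bullet \gamma$, using consistency of $M_2$ to certify $\mathcal{O}_{w'} \neq \emptyset$, and invoking clause~3 (read with $\mathcal{O}_w \in \mathcal{F}^\mu_w$) to pull the witness back into $M_1$.

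The one genuine difference from the $1$-bisimulation proof, and the step I expect to require the most care, is the direction flip caused by working with $\mathcal{O}^{-1}$ rather than with $\mathcal{O}$ itself. I would verify that the universally quantified domain in the $\bullet$-clause, namely $\mathcal{O}^{-1}_w$, is exactly the set over which clauses~2 and~3 supply the back-and-forth matching, and that the witness neighbourhood produced on the other side lies in the appropriate $\mathcal{F}$-family so that it can itself serve as a $\bullet$-witness. I would also flag that consistency is used in a slightly different place than in the $1$-bisimulation argument: it is not needed to populate $\mathcal{O}^{-1}$ (which automatically contains $w$, resp.\ $w'$), but only to ensure $\mathcal{O}_w \neq \emptyset$, resp.\ $\mathcal{O}_{w'} \neq \emptyset$, so that the side condition $\emptyset \neq \mathcal{O}$ of clauses~2 and~3 is met. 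No other step presents any difficulty.
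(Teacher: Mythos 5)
Your proof is correct and follows essentially the same route as the paper's own (much terser) proof: induction on formula complexity, with the $\bullet$-case handled by invoking clauses 2 and 3 of the $2$-bisimulation exactly as you describe. In fact your write-up is more careful than the paper's, since you explicitly verify the side conditions $\mathcal{O}_{w} \in \mu$, $\mathcal{O}_{w} \neq \emptyset$ (the one place consistency is needed) and $w \in \mathcal{O}^{-1}_{w}$ before applying the bisimulation clauses, and you correctly read clause 3 with $\mathcal{O}_{w} \in \mathcal{F}^{\mu}_{w}$ in place of the paper's typo $\mathcal{F}^{\tau}_{w}$.
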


\begin{proof}
The proof goes by induction. We present only the modal case. Assume that $\varphi := \bullet \gamma$ and $w \Vdash_\mu \bullet \gamma$. Hence, there is $\mathcal{O}_{w} \in \mathcal{F}^{\mu}_{w}$ such that $\mathcal{O}_{w}^{-1} \subseteq V_{\mu}(\gamma)$. Using $2$-bisimulation, we find $\mathcal{O}_{w'} \in \mathcal{F}^{\tau}_{w'}$ with expected properties. Now we take $v' \in \mathcal{O}_{w'}^{-1}$ and we find suitable $v \in \mathcal{O}_{w}^{-1}$. Hence, $vTv'$ and for this reason they support the same formulas. In particular, $v' \Vdash_\tau \varphi$. Thus, we can say that $w' \Vdash_\tau \bullet \varphi$. Similar reasoning can be provided if we start from the assumption that $w' \Vdash_\tau \bullet \varphi$.
\end{proof}

At first glance, this new operator seems to be somewhat artificial. However, we can show that if we replace $\Box$ by $\bullet$, then certain subclass of \bgtf-models can be treated as strong \gt-model.

\begin{df}
\label{subclass}
Assume that $F = \langle W, \mu, \mathcal{F} \rangle$ is \bgtf-frame. We say that $M = \langle W, \mu, \mathcal{F}, V_\mu \rangle$ is in-fact-strong (i.f.s.) \bgtf-model iff modality in our language is interpreted as $\bullet$ and the following additional conditions hold for any $w \in W \setminus \bigcup \mu$:

\begin{enumerate}
\item if $X \in \mathcal{F}_{w}$ and $X \subseteq Y \in \mu$, then $Y \in \mathcal{F}_{w}$ (superset condition).
\item if $\bigcup_{i \in J} X_i \in \mathcal{F}_{w}, \text{ where } X_i \in \mu \text{ for any }i \in J$, then there is $k \in J$ such that $w \in X^{-1}_k$ (union partition condition).
\item $\mathcal{F}_{w} \neq \emptyset$.
\end{enumerate}

\end{df}

Now we can formulate the following theorem:

\begin{tw}
For any i.f.s. \bgtf-model there exists pointwise equivalent \sgt-model (with modality interpreted as $\Box$). 
\end{tw}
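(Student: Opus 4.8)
The plan is to build the strong topology directly from the inverse operator $A \mapsto A^{-1}$, keeping the universe $W$ and the valuation $V_\mu$ fixed. Concretely, I would set
\[
\mu' = \{\emptyset\} \cup \{A^{-1} : A \in \mu\},
\]
and take $M' = \langle W, \mu', V_\mu \rangle$ as the candidate \sgt-model, reading its modality as $\Box$. Since a strong model is determined by its topology, the neighbourhood family of a point $w$ is forced to be $\mathcal{F}'_w = \{U \in \mu' : w \in U\}$; this is precisely the data I want to match against the $\bullet$-clause of $M$.

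The crux — and the single place where the two non-trivial i.f.s. conditions are consumed — is the claim that $A \mapsto A^{-1}$ commutes with arbitrary unions, i.e. $\bigl(\bigcup_{i \in J} A_i\bigr)^{-1} = \bigcup_{i \in J} A_i^{-1}$ for any family $A_i \in \mu$. For $w \in \bigcup \mu$ this is immediate, since there $A \in \mathcal{F}_w \Leftrightarrow w \in A$, so $A^{-1}$ and $A$ agree inside $\bigcup \mu$. For $w \in W \setminus \bigcup \mu$ the inclusion $\supseteq$ is exactly the superset condition (from $A_k \in \mathcal{F}_w$ and $A_k \subseteq \bigcup_i A_i \in \mu$ we get $\bigcup_i A_i \in \mathcal{F}_w$), while the inclusion $\subseteq$ is exactly the union partition condition ($\bigcup_i A_i \in \mathcal{F}_w$ yields some $k$ with $w \in A_k^{-1}$). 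Granting this, $\mu'$ is closed under arbitrary unions and contains $\emptyset$, hence is a generalized topology. I expect this union-commutation to be the main obstacle; without conditions (1) and (2) the family $\mu'$ need not be a topology at all.

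Next I would verify strongness, $\bigcup \mu' = W$. Writing $\bigcup \mu' = \bigcup_{A \in \mu} A^{-1} = (\bigcup \mu)^{-1}$ (using the commutation just proved together with $\bigcup \mu \in \mu$), it suffices to show $(\bigcup \mu)^{-1} = W$. Every $w \in \bigcup \mu$ lies in it because $\bigcup \mu \in \mathcal{F}_w$; every $w \in W \setminus \bigcup \mu$ lies in it because condition (3) yields some $X \in \mathcal{F}_w$ with $X \in \mu$, whence $X \subseteq \bigcup \mu$ and the superset condition forces $\bigcup \mu \in \mathcal{F}_w$. Thus $M'$ is genuinely strong, and no separate consistency hypothesis is needed; should the model happen to be consistent one even has $\emptyset^{-1} = \emptyset$, so adjoining $\emptyset$ explicitly becomes superfluous.

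Finally I would prove pointwise equivalence $w \Vdash_\mu \varphi \Leftrightarrow w \Vdash_{\mu'} \varphi$ by induction on $\varphi$; the atomic and Boolean cases are immediate since $W$ and the valuation are unchanged. For the modal step, recalling that $M$ reads its modality as $\bullet$ while $M'$ reads it as $\Box$, note that a witness for $w \Vdash_\mu \bullet \gamma$ is some $\mathcal{O}_w \in \mathcal{F}_w$ with $\mathcal{O}_w^{-1} \subseteq \{v : v \Vdash_\mu \gamma\}$, and that $\mathcal{O}_w \in \mathcal{F}_w$ is equivalent to $w \in \mathcal{O}_w^{-1}$. By the definition of $\mu'$, the sets $\mathcal{O}_w^{-1}$ with $\mathcal{O}_w \in \mathcal{F}_w$ are exactly the non-empty $\mu'$-open sets containing $w$, i.e. exactly the members of $\mathcal{F}'_w$. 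Hence, after replacing $\{v : v \Vdash_\mu \gamma\}$ by $\{v : v \Vdash_{\mu'} \gamma\}$ through the induction hypothesis, the existence of a $\bullet$-witness in $M$ coincides verbatim with the existence of a $\Box$-witness in $M'$, giving $w \Vdash_\mu \bullet \gamma \Leftrightarrow w \Vdash_{\mu'} \Box \gamma$. Once the union-commutation of step two is secured, everything here is routine bookkeeping.
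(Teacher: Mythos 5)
Your proposal is correct and follows essentially the same route as the paper: the topology $\mu' = \{\emptyset\} \cup \{A^{-1} : A \in \mu\}$ is exactly the paper's $\tau$, closure under unions is proved by the same commutation $\bigl(\bigcup_{i} A_i\bigr)^{-1} = \bigcup_{i} A_i^{-1}$ consuming the superset and union-partition conditions in the same way, strongness uses condition (3) plus superset just as the paper does, and the modal step is the same witness-for-witness correspondence between $\bullet$ in $M$ and $\Box$ in $M'$. Your explicit case-split at points of $\bigcup\mu$ (where the i.f.s. conditions hold automatically) is a slightly more careful presentation of the same argument, not a different one.
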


\begin{proof}
Having generalized topology $\mu$, we must establish new generalized topology $\tau$ in such a way that the whole space becomes $\tau$-open. Hence, we are looking for $M' = \langle W', \tau, V_\tau \rangle$, where we can presuppose that $W' = W$ and $V_\tau = V_\mu$. 

Let us assume that $Y \in \tau \Leftrightarrow Y = \emptyset$ or $Y = X^{-1}$ for certain $X \in \mu$. We must check properties of gen. top. First, $\emptyset \in \tau$ (by the very definition). What about arbitrary unions? Let $J \neq \emptyset$ and $Y_i$ be $\tau$-open for any $i \in J$. We may assume that there is at least one $k \in J$ such that $Y_k \neq \emptyset$. Consider $\bigcup_{i \in J} Y_i = \bigcup_{i \in J} X^{-1}_i$  (where $X^{-1}_{i} = Y_{i}$ and $X_i \in \mu$ for each $i \in J$). We shall show that this set is equal with $(\bigcup_{i \in J} X_i)^{-1}$:

($\subseteq$) Let $w \in \bigcup_{i \in J}Y_i$. Hence, there are $k \in J$ and $Y_k$ such that $w \in Y_k$. Therefore there is $X_k \in \mu$ such that $Y_k = X_k^{-1}$. Thus $X_k \in \mathcal{F}_{w}$. Moreover, $X_k \subseteq \bigcup_{i \in J}X_i$. But from the basic properties of gen. top. $\bigcup_{i \in J} X_{i} \in \mu$. By the superset condition we have that $\bigcup_{i \in J} X_i \in \mathcal{F}_{w}$. Thus $w \in (\bigcup_{i \in J} X_i)^{-1}$. 

($\supseteq$) Let $w \in (\bigcup_{i \in J} X_i)^{-1}$. Hence, $\bigcup_{i \in J}X_i \in \mathcal{F}_{w}$. By the partition condition, there is $k \in J$ such that $X_k \in \mathcal{F}_{w}$. Hence, $w \in X^{-1}_{k} \subseteq \bigcup_{i \in J}X^{-1}_{i}$. 

Regarding the whole space: for each $w \in W$, $\mathcal{F}_{w} \neq \emptyset$. Hence, for each $w \in W$ there is $X_{w}$ such that $X_{w} \in \mathcal{F}_{w}$, i.e. $w \in X_{w}^{-1}$. Thus $W \subseteq \bigcup_{w \in W} X_{w}^{-1} = (\bigcup_{w \in W}X_{w})^{-1}$ and the other inclusion is trivial. Finally, $W$ is $\tau$-open. 

Now let us go to the pointwise equivalency. We shall prove only the modal case which should be written as such: $w \Vdash_\mu \bullet \varphi \Leftrightarrow w \Vdash_\tau \Box \varphi$. Assume that $w \Vdash_\mu \bullet \varphi$. Hence, there is $X \in \mathcal{F}_{w}$ such that for any $v \in X^{-1}$, $v \Vdash_\mu \varphi$. By induction hypothesis, $v \Vdash_\tau \varphi$. But $X^{-1} \in \tau$, so we can say that there is $Z \in \tau$ (namely, $Z = X^{-1}$) such that $w \in Z$ and for any $v \in Z$, $v \Vdash_\tau \varphi$. Thus $w \Vdash_\tau \Box \varphi$. 

Now suppose that $w \Vdash_\tau \Box \varphi$. Hence, there is $Z \in \tau$ such that $w \in Z$ and for any $v \in Z$, $v \Vdash_\tau \varphi$. By induction, $v \Vdash_\mu \varphi$. But there must be $X \in \mu$ such that $Z = X^{-1}$. Thus $w \in X^{-1}$, i. e. $X^{-1} \in \mathcal{F}_{w}$. This means that $w \Vdash_\mu \bullet \varphi$. 

\end{proof}
The next theorem is much simpler to prove: 
\begin{tw} Assume that $M' = \langle W', \tau, V_\tau \rangle$ is an \sgt-model. Then there exists pointwise equivalent i.f.s. \bgtf-model $M = \langle W, \mu, \mathcal{F}, V_\mu \rangle$. 
\end{tw}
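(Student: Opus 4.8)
The plan is to use an almost trivial \emph{identity} construction. I would set $W = W'$, $\mu = \tau$ and $V_\mu = V_\tau$, and let $\mathcal{F}$ be the canonical map forced by Definition \ref{gtmod}. Since the \sgt-model is strong, $\bigcup \mu = \bigcup \tau = W$, so there are no orphaned points and $\mathcal{F}$ is completely determined by the first clause of Definition \ref{gtmod}: $X \in \mathcal{F}_w \Leftrightarrow X \in \mu$ and $w \in X$, for every $w \in W$. This makes $\langle W, \mu, \mathcal{F} \rangle$ a legitimate \gtf-frame.

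Next I would check that $M = \langle W, \mu, \mathcal{F}, V_\mu \rangle$ is in-fact-strong. The three conditions of Definition \ref{subclass} are all quantified over $w \in W \setminus \bigcup \mu$, and here that set is empty; hence the superset condition, the union-partition condition and the non-emptiness condition hold vacuously, and $M$ is i.f.s. with no further argument.

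The key observation --- and the one step that deserves attention --- is that in this strong setting $A^{-1} = A$ for every $A \in \mu$. Since every $z$ lies in $\bigcup \mu$, the membership $A \in \mathcal{F}_z$ reduces, for fixed $A \in \mu$, to $z \in A$; hence $A^{-1} = \{z \in W : A \in \mathcal{F}_z\} = A$. With this in hand the $\bullet$-clause collapses onto the $\Box$-clause: because $\mathcal{O}_w^{-1} = \mathcal{O}_w$, the condition ``there is $\mathcal{O}_w \in \mathcal{F}_w$ with $\mathcal{O}_w^{-1} \subseteq V_\mu(\varphi)$'' becomes literally ``there is an open $\mathcal{O}_w \ni w$ with $\mathcal{O}_w \subseteq V_\tau(\varphi)$''.

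Finally I would conclude pointwise equivalence by the usual induction on formula complexity: the propositional-variable and Boolean cases are immediate from $V_\mu = V_\tau$, and the modal case $w \Vdash_\tau \Box \varphi \Leftrightarrow w \Vdash_\mu \bullet \varphi$ is exactly the equivalence just noted (invoking the induction hypothesis inside $\mathcal{O}_w$ to pass between $v \Vdash_\tau \varphi$ and $v \Vdash_\mu \varphi$). The only genuine obstacle is pinning down the identity $A^{-1} = A$ rather than a mere inclusion; once the absence of orphaned points is used for this, everything else is routine, which is precisely why this theorem is ``much simpler'' than its converse.
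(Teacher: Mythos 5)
Your proposal is correct and takes essentially the same route as the paper's own sketch: the identity construction with $\mathcal{F}_w$ taken to be the family of all open sets containing $w$, the key identity $X^{-1} = X$ for open $X$ in the strong setting, and the resulting collapse of the $\bullet$-clause onto the $\Box$-clause, closed off by the standard induction. The only (harmless) difference is that you discharge the three i.f.s.\ conditions of Definition \ref{subclass} vacuously, since they quantify over $W \setminus \bigcup \mu = \emptyset$, whereas the paper verifies them substantively for the neighbourhood filters; both readings are sound.
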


\begin{proof}
(sketch)
It is easy to check that \sgt-model is just a special case of i.f.s. \bgtf-model. We should just identify $\mathcal{F}_{w}$ with the family of all open sets containing $w$ (for each $w \in W$). It is clear that superset condition holds. Obviously, $\mathcal{F}_{w} \neq \emptyset$. Concerning union partition condition, we may take an arbitrary $k \in J$ because $w$ is in each of its open neighbourhoods. Note that in \sgt-frame $X = X^{-1}$ for any $X \subseteq \bigcup \tau = W$.  
\end{proof}

\section{\gtff-models}
\label{fff}
In this section we shall slightly change the definition of generalized topological frame. Moreover, we shall work with two modal operators.

\begin{df}
We define \gtff-model as a quintuple $M_\mu = \langle W_\mu, \mu, \mathbf{f}, \mathcal{N}, V_\mu \rangle$ such that $\mu$ is a generalized topology on $W$, $V$ is a valuation from $PV$ to $P(W)$ and $W$ consists of two separate subsets $Y_1$ and $Y_2$:

\begin{enumerate}
\item If $w \in Y_1$, then we link $w$ with certain $v \in \bigcup \mu$ (by means of a function $\mathbf{f}$, i.e. $\mathbf{f}$ is a function from $Y_1$ into $\bigcup \mu$).

\item If $w \in Y_2$ then we associate $w$ with certain $\mathcal{N}_{w}$, hence $\mathcal{N}$ is a function from $Y_2$ into $P(P(W))$.

\end{enumerate}
\end{df}

As for the forcing of modal formulas, the following interpretation is given:

\begin{df}
In each \gtff-model $M_\mu = \langle W_\mu, \mu, \mathbf{f}, \mathcal{N}, V_\mu \rangle$, for any $w \in W$ and for any $\varphi$: 

\begin{enumerate}
\item $w \Vdash_\mu \Box \varphi \Leftrightarrow$ there exists $X \in \mu$ such that $w \in X$ and for any $v \in X$, $v \Vdash_\mu \varphi$.

\item $w \Vdash_\mu \blacksquare \varphi \Leftrightarrow$

\begin{enumerate}

\item There is $X \in \mu$ such that $\mathbf{f}(w) \in X$ and for any $v \in X$, $v \Vdash_\mu \varphi$; \quad \emph{iff} $w \in Y_1$.

\item $V(\varphi) \in \mathcal{N}_{w}$; \quad \emph{iff} $w \in Y_2$. 

\end{enumerate}
\end{enumerate}

\end{df}

Let us define system \giez as the following set of axiom schemes and rules: \cpc $\cup$ \{ $\maxx_\Box$, $\tax_\Box$, $\four_\Box$, $\rext_\Box$, $\rext_\blacksquare$, \mpon\}, where \cpc means all modal instances of the classical propositional tautologies. As we can see, $\blacksquare$-free fragment of this logic is just \mtf (with $\Box$ as modality). At the same time, $\Box$-free fragment is just the weakest modal logic (with respect to $\blacksquare$) in which we do not expect more than the rule of extensionality. 

Now we can define appropriate canonical model:

\begin{df}
\label{cangtff}
Canonical \gtff-model is a structure $\langle W_\mu, \mu, \mathbf{f}, \mathcal{N}, V_\mu \rangle$, where:

\begin{enumerate}

\item $W_\mu$ is the set of maximal theories of \giez.

\item $V_\mu$ is a function from $PV$ into $P(W)$ such that for any $q \in PV$, $V(q) = \{w \in W; q \in W\}$.

\item $\mu$ is a subfamily of $W$, namely the union of all \emph{basic sets}, where single basic set $\widehat{\Box \varphi} = \{w \in W; \Box \varphi \in w\}$ \footnote{In general $\widehat{\gamma} = \{w \in W; \gamma \in w\}$.}.

\item $Y_1$ is a set of all such theories from $W$ for which there is a theory $v \in \bigcup \mu$ such that for any formula $\varphi$ we have: $\Box \varphi \in v \Leftrightarrow \blacksquare \varphi \in w$.

\item $Y_2 = W \setminus Y_1$. 

\item $\mathbf{f}$ is a function from $Y_1$ into $\bigcup \mu$ such that $\mathbf{f}(w) = v$ where $v$ is as in (4).

\item $\mathcal{N}$ is a function from $Y_2$ into $P(P(W))$ defined as: $\mathcal{N}_{w} = \{ \widehat{\varphi}; \blacksquare \varphi \in w\}$. 

\end{enumerate}

\end{df}

Using Lindenbaum theorem and rule of $\blacksquare$-extensionality, we can easily prove the following two lemmas:

\begin{lem}
Let $W_\mu$ be a collection of all maximal theories of \giez and let $\{z \in W; \varphi \in z \} = \{z \in W ; \psi \in z\}$. Then $\varphi \rightarrow \psi \in \giez$. \end{lem}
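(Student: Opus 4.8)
The plan is to argue by contraposition, reducing the statement to a single application of the Lindenbaum theorem. The hypothesis $\{z \in W; \varphi \in z\} = \{z \in W; \psi \in z\}$ says exactly that $\widehat{\varphi} = \widehat{\psi}$, i.e. that $\varphi$ and $\psi$ belong to precisely the same maximal theories of \giez. So it suffices to show that if $\varphi \rightarrow \psi \notin \giez$, then some maximal theory separates $\varphi$ from $\psi$, which contradicts this equality.

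First I would record that \giez contains all modal instances of the classical propositional tautologies (\cpc) together with \mpon, so full propositional reasoning is available inside the system. From $\varphi \rightarrow \psi \notin \giez$ I would then deduce that the set $\{\varphi, \lnot \psi\}$ is \giez-consistent: were it inconsistent, $\lnot(\varphi \land \lnot \psi)$ would be a theorem, and since $\lnot(\varphi \land \lnot \psi) \rightarrow (\varphi \rightarrow \psi)$ is a (modal instance of a) propositional tautology, \mpon would give $\varphi \rightarrow \psi \in \giez$, contrary to assumption. Note that this reduction uses only the classical propositional core of \giez and invokes neither modal operator.

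Next I would apply the Lindenbaum theorem to extend the consistent set $\{\varphi, \lnot \psi\}$ to a maximal theory $z \in W_\mu$. By maximality and consistency of $z$ we have $\varphi \in z$ while $\psi \notin z$ (as $\lnot \psi \in z$). Hence $z \in \widehat{\varphi}$ but $z \notin \widehat{\psi}$, so $\widehat{\varphi} \neq \widehat{\psi}$, contradicting the hypothesis. Therefore $\varphi \rightarrow \psi \in \giez$, as required.

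I do not expect any genuine obstacle: this is the standard ``equal truth-sets imply provable implication'' fact, and it rests only on the classical propositional part of \giez plus Lindenbaum's lemma. The one point deserving a little care is the first reduction — verifying that non-provability of $\varphi \rightarrow \psi$ really yields consistency of $\{\varphi, \lnot \psi\}$ — but this is immediate from the presence of \cpc and \mpon. In particular, unlike the companion lemma announced alongside it, this statement does not actually require the rule $\rext_\blacksquare$.
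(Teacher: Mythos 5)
Your proof is correct and follows exactly the route the paper intends: the paper gives no detailed argument, saying only that the lemma follows ``easily'' from the Lindenbaum theorem, and your contraposition argument (non-provability of $\varphi \rightarrow \psi$ yields consistency of $\{\varphi, \lnot\psi\}$, Lindenbaum extends it to a maximal theory separating $\widehat{\varphi}$ from $\widehat{\psi}$) is precisely that standard argument spelled out. Your closing remark is also accurate: the rule $\rext_\blacksquare$ mentioned in the paper's hint is needed only for the companion lemma about $\mathcal{N}_{w}$, not for this one.
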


\begin{lem}
\label{kanon}
In a canonical \gtff-model we have the following property: for each maximal theory $w$, if $\{z \in W; \varphi \in z\} \in \mathcal{N}_{w}$ and $\{z \in W; \varphi \in z\} = \{z \in W; \psi \in z\}$, then $\blacksquare \psi \in w$. 
\end{lem}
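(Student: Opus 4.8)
The plan is to prove Lemma \ref{kanon} directly from the definition of the canonical model together with the previous lemma. I would begin by unfolding the hypotheses: we are given a maximal theory $w$ of \giez, a formula $\varphi$ such that $\widehat{\varphi} \in \mathcal{N}_{w}$, and a formula $\psi$ with $\widehat{\varphi} = \widehat{\psi}$ (using the footnote notation $\widehat{\gamma} = \{z \in W; \gamma \in z\}$). By clause (7) of Definition \ref{cangtff}, the membership $\widehat{\varphi} \in \mathcal{N}_{w}$ means precisely that there is some formula $\chi$ with $\blacksquare \chi \in w$ and $\widehat{\chi} = \widehat{\varphi}$. The goal is to conclude $\blacksquare \psi \in w$.

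The key step is to apply the preceding lemma, which states that whenever $\widehat{\alpha} = \widehat{\beta}$ we have $\alpha \rightarrow \beta \in \giez$ (as a theorem of the logic). First I would note that from $\widehat{\varphi} = \widehat{\psi}$ and $\widehat{\chi} = \widehat{\varphi}$ we obtain $\widehat{\chi} = \widehat{\psi}$, and hence also the reverse equalities hold by symmetry. Applying the previous lemma in both directions gives $\chi \rightarrow \psi \in \giez$ and $\psi \rightarrow \chi \in \giez$, so that $\chi \leftrightarrow \psi$ is a theorem of \giez. At this point the rule $\rext_\blacksquare$ (the rule of $\blacksquare$-extensionality, $\varphi \leftrightarrow \psi \vdash \blacksquare \varphi \leftrightarrow \blacksquare \psi$), which is among the axioms and rules of \giez, yields $\blacksquare \chi \leftrightarrow \blacksquare \psi \in \giez$.

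Finally I would use the standard properties of maximal theories. Since $w$ is a maximal (hence deductively closed and consistent) theory containing every theorem of \giez, the theorem $\blacksquare \chi \leftrightarrow \blacksquare \psi$ belongs to $w$. Combining this with $\blacksquare \chi \in w$ and closure of $w$ under \mpon (applied to the relevant half of the biconditional) delivers $\blacksquare \psi \in w$, as required.

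I expect the main subtlety to be purely bookkeeping rather than mathematical depth: one must be careful to extract the \emph{witness} formula $\chi$ correctly from the definition of $\mathcal{N}_{w}$ in clause (7), since $\mathcal{N}_{w}$ consists of sets of the form $\widehat{\chi}$ indexed by formulas with $\blacksquare \chi \in w$, and the given set $\widehat{\varphi}$ may coincide with $\widehat{\chi}$ for a \emph{syntactically different} $\chi$. Once this witness is in hand, the argument is a routine chain of applications of the previous lemma, the rule $\rext_\blacksquare$, and the closure properties of maximal theories, so there is no genuine obstacle beyond ensuring the extensionality rule is invoked on the correct pair of formulas.
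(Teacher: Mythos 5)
Your proof is correct and matches the paper's intended argument: the paper gives no detailed proof of this lemma, remarking only that it follows \emph{using Lindenbaum theorem and rule of $\blacksquare$-extensionality}, which is precisely your route --- the Lindenbaum theorem enters through the preceding lemma (giving $\chi \leftrightarrow \psi$ as a theorem of \giez), and then $\rext_{\blacksquare}$ together with closure of maximal theories under \mpon delivers $\blacksquare \psi \in w$. Your attention to extracting the witness formula $\chi$ with $\blacksquare \chi \in w$ and $\widehat{\chi} = \widehat{\varphi}$ from clause (7) of Definition \ref{cangtff} is exactly the right bookkeeping point, since $\varphi$ itself need not satisfy $\blacksquare \varphi \in w$.
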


Now we can show the fundamental theorem referring to the question of completeness.

\begin{tw}
In a canonical \gtff-model, for any $\gamma$ and for any theory $w$, we have:

$w \Vdash_\mu \gamma \Leftrightarrow \gamma \in w$.

\end{tw}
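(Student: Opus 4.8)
The plan is to prove the biconditional by induction on the complexity of $\gamma$, establishing it simultaneously for \emph{every} maximal theory $w$; throughout I abbreviate $\widehat{\chi} = \{z \in W_\mu; \chi \in z\}$ and use freely that each maximal \giez-theory is deductively closed and contains exactly one of $\chi, \lnot \chi$. For the base case $\gamma := q \in PV$ we have $w \Vdash_\mu q \Leftrightarrow w \in V_\mu(q) \Leftrightarrow q \in w$ by the definition of $V_\mu$, and the Boolean cases $\varphi \land \psi$, $\varphi \lor \psi$, $\varphi \rightarrow \psi$, $\lnot \varphi$ follow routinely from the induction hypothesis together with maximal consistency and the presence of \cpc and \mpon in \giez. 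So the real work concentrates on the two modal cases.

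For $\gamma := \Box \varphi$, assuming the induction hypothesis for $\varphi$ (so $V_\mu(\varphi) = \widehat{\varphi}$), I would prove $u \Vdash_\mu \Box \varphi \Leftrightarrow \Box \varphi \in u$ for every $u$, since the $\blacksquare$-case reuses it. For $(\Leftarrow)$: if $\Box \varphi \in w$, then $X := \widehat{\Box \varphi}$ is a basic set, hence $X \in \mu$ and $w \in X$; for each $v \in X$ axiom $\tax_\Box$ and deductive closure give $\varphi \in v$, i.e. $v \Vdash_\mu \varphi$, whence $w \Vdash_\mu \Box \varphi$. For $(\Rightarrow)$: if $w \Vdash_\mu \Box \varphi$, fix the witnessing $X \in \mu$ with $w \in X$ and (by the induction hypothesis) $X \subseteq \widehat{\varphi}$. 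As $X$ is a union of basic sets and $w \in X$, there is a basic set with $w \in \widehat{\Box \psi} \subseteq X \subseteq \widehat{\varphi}$, so $\Box \psi \in w$ and $\widehat{\Box \psi} \subseteq \widehat{\varphi}$. By the inclusion analogue of the first lemma (a Lindenbaum argument) I obtain $\Box \psi \rightarrow \varphi \in \giez$; deriving the rule $\monot_\Box$ from $\maxx_\Box$ and $\rext_\Box$ gives $\Box \Box \psi \rightarrow \Box \varphi \in \giez$, and combining with $\four_\Box$ (which yields $\Box \psi \rightarrow \Box \Box \psi$) gives $\Box \psi \rightarrow \Box \varphi \in \giez$. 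Since $\Box \psi \in w$, deductive closure forces $\Box \varphi \in w$.

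For $\gamma := \blacksquare \varphi$, I split on $w$, using the just-proved equivalence for $\Box$. If $w \in Y_1$, set $v = \mathbf{f}(w) \in \bigcup \mu$, so that $\Box \chi \in v \Leftrightarrow \blacksquare \chi \in w$ for all $\chi$; the forcing clause for $\blacksquare$ at a $Y_1$-point is literally the forcing clause for $\Box$ evaluated at $v$, so $w \Vdash_\mu \blacksquare \varphi \Leftrightarrow v \Vdash_\mu \Box \varphi \Leftrightarrow \Box \varphi \in v \Leftrightarrow \blacksquare \varphi \in w$. If $w \in Y_2$, then $w \Vdash_\mu \blacksquare \varphi \Leftrightarrow V_\mu(\varphi) \in \mathcal{N}_w$, and $V_\mu(\varphi) = \widehat{\varphi}$ by the induction hypothesis. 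If $\blacksquare \varphi \in w$ then $\widehat{\varphi} \in \mathcal{N}_w$ by the definition of $\mathcal{N}_w$; conversely, if $\widehat{\varphi} \in \mathcal{N}_w$, then Lemma \ref{kanon} (taking both formulas to be $\varphi$ itself, which is where $\rext_\blacksquare$ is used essentially) yields $\blacksquare \varphi \in w$. This closes the induction.

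The hard part will be the direction $(\Rightarrow)$ of the $\Box$-case: recovering the syntactic fact $\Box \varphi \in w$ from a merely semantic witness, namely an open set (a union of basic sets) trapped between $w$ and $\widehat{\varphi}$. This is precisely where the full strength of the $\Box$-fragment enters --- extracting one basic set $\widehat{\Box \psi}$, passing from the inclusion $\widehat{\Box \psi} \subseteq \widehat{\varphi}$ to derivability via Lindenbaum, and then chaining the \emph{derived} rule $\monot_\Box$ with the axiom $\four_\Box$. The $Y_1$ subcase of $\blacksquare$ is the other potentially delicate point, but it collapses cleanly onto the already-settled $\Box$-case through the defining property of $\mathbf{f}$, so no new work is needed there.
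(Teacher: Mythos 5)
Your proof is correct, and its technical core is the same as the paper's: the hard direction rests on extracting a basic set $\widehat{\Box \psi}$ from the open witness, passing from the inclusion $\widehat{\Box \psi} \subseteq \widehat{\varphi}$ to theoremhood of $\Box \psi \rightarrow \varphi$ by a Lindenbaum argument, and then chaining $\monot_\Box$ with $\four_\Box$; the $Y_2$ case is handled, exactly as in the paper, by the definition of $\mathcal{N}_{w}$ together with Lemma \ref{kanon}. The difference is one of factoring, and yours is arguably cleaner. The paper never proves the $\Box$-case itself --- it cites \cite{jarvi} for that fragment --- and instead runs the full syntactic argument inline in the $\blacksquare$/$Y_1$ case, working at the theory $u = \mathbf{f}(w)$. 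You prove the $\Box$-equivalence in full for every theory first, and then observe that the forcing clause for $\blacksquare$ at a $Y_1$-point is literally the $\Box$-clause evaluated at $\mathbf{f}(w)$, so that case collapses to the chain $w \Vdash_\mu \blacksquare \varphi \Leftrightarrow \mathbf{f}(w) \Vdash_\mu \Box \varphi \Leftrightarrow \Box \varphi \in \mathbf{f}(w) \Leftrightarrow \blacksquare \varphi \in w$, the last step by the defining property of $\mathbf{f}$ in the canonical model. This buys self-containedness (no appeal to an external completeness proof for \mtf) and avoids duplicating the heavy argument. You are also more scrupulous than the paper on one point: $\monot$ is not a primitive rule of \giez, and you note explicitly that it must be derived from $\maxx_\Box$ and $\rext_\Box$ (via $\vdash \varphi \rightarrow \psi$ implies $\vdash \varphi \leftrightarrow \varphi \land \psi$), whereas the paper invokes it silently.
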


\begin{proof}
The proof goes by induction. If $\gamma := \Box \varphi$ then we can repeat corresponding fragment from \cite{jarvi} (note that in this case we are interested only in worlds from $\bigcup \mu$). Hence, assume that $\gamma := \blacksquare \varphi$.

$(\Leftarrow)$

We have two options:

\begin{enumerate}

\item $w \in Y_1$. Assume that $\blacksquare \varphi \in w$. Then $\Box \varphi \in u = \mathbf{f}(w)$. Hence $u \in \widehat{\Box \varphi}$. This is a basic set of topology $\mu$, hence it belongs to $\mu$. By the axiom \tax $\Box$ (i.e. $\Box \varphi \rightarrow \varphi$) we state that $\widehat{\Box \varphi} \subseteq \widehat{\varphi}$. Hence there is $X = \widehat{\Box \varphi}$ such that $X \in \mu$, $u \in X$ and for all $v \in X$ we have $\varphi \in v$. By induction hypothesis it means that $v \Vdash_\mu \varphi$. Hence $u \Vdash_\mu \Box \varphi$. But this means that $w \Vdash_\mu \blacksquare \varphi$. 

\item $w \in Y_2$. Assume that $\blacksquare \varphi \in w$. Then (by the definition of $\mathcal{N}$) we have that $\{z \in W; \varphi \in z\} = \widehat{\varphi} \in \mathcal{N}_{w}$. By induction hypothesis $\widehat{\varphi} = \{z \in W; z \Vdash_\mu \varphi\} \in \mathcal{N}_{w}$. Hence $w \Vdash_\mu \blacksquare \varphi$. 

\end{enumerate}

$(\Rightarrow)$

Again we have two possibilities:

\begin{enumerate}

\item $w \in Y_1$. Assume that $w \Vdash_\mu \blacksquare \varphi$. Hence there is $X \in \mu$ such that $u = \mathbf{f}(w) \in X$ and for any $v \in X$, $v \Vdash_\mu \varphi$. If $X \in \mu$, then $X$ can be considered as a sum of several basic sets. Among them there must be set $\widehat{\Box \beta}$ (for certain formula $\beta$) such that $u \in \Box \beta$. Of course $\widehat{\Box \beta} \subseteq X$, hence for each $v \in \widehat{\Box \beta}$ we have $v \Vdash_\mu \varphi$. By induction hypothesis $\varphi \in v$. Hence $\widehat{\Box \beta} \subseteq \widehat{\varphi}$.

Now we can say that $\Box \beta \rightarrow \varphi$ is a theorem. Assume the contrary. Then there must be certain maximal, consistent theory to which both $\Box \beta$ and $\lnot \varphi$ belong. But we know that $\widehat{\Box \beta} \subseteq \widehat{\varphi}$. This is contradiction.

Now let us go back to the main part of the proof. We use \monot $\Box$ to prove implication $\Box \Box \beta \rightarrow \Box \varphi$. Using axiom \four $\Box$, we obtain $\Box \beta \rightarrow \Box \varphi$. This means that $u \in \widehat{\Box \beta} \subseteq \widehat{\Box \varphi}$. Hence, $\Box \varphi \in u$. But $u = \mathbf{f}(w)$. Then $\blacksquare \varphi \in w$. 

\item $w \in Y_2$. Let us assume that $w \Vdash_\mu \blacksquare \varphi$. Hence $\{z \in W; z \Vdash_\mu \varphi\} \in \mathcal{N}_{w}$. By induction hypothesis $\{z \in W; \varphi \in z\} \in \mathcal{N}_{w}$. By the definition of $\mathcal{N}$ and Lem. \ref{kanon} it means that $\blacksquare \varphi \in w$. 

\end{enumerate}

\end{proof}

Of course, \giez is not a very useful system because it does not give us any connection between two modalities. Hence, let us add $\Box \varphi \rightarrow \blacksquare \varphi$ to \giez (this new system will be called \giej). Let us now sketch briefly how we define appropriate subclass of frames (and corresponding canonical model). 

\begin{df}
We define \gtfi-model as a structure arising from \gtff-model but with the following stipulations:

\begin{enumerate}
\item $\bigcup \mu \subseteq Y_1$.

\item For any $w \in \bigcup \mu$, $\mathbf{f}$ is identity, i.e. $\mathbf{f}(w) = w$. 
\end{enumerate}
\end{df}

Clearly, our new axiom is satisfied in this environment. Let us check this fact: assume that $w \Vdash \Box \varphi$. By the definition, it means that there is $X \in \mu$ such that $w \in X$ and for any $v \in X, v \Vdash \varphi$. In particular, it means also that $w \in \bigcup \mu \subseteq Y_1$. Then $\mathbf{f}(w)=w$. This all allows us to say that $w \Vdash \blacksquare \varphi$.

In a canonical \gtfi-model we consider the set $W_{\blacksquare}$, defined as the set of all theories $w \in W$ such that for any $\varphi$: $\blacksquare \varphi \in w \Rightarrow \Box \varphi \in w$. Note that in each \giez-theory the converse implication is always true (because of the new axiom which has been added). Hence, for any $w \in W_{\blacksquare}$ we have: $\Box \varphi \in w \Leftrightarrow \blacksquare \in w$. Now let us define $\mu$ in a manner similar to the one from canonical \gtff-model, but assuming that $\mu$ is a subfamily of $W_{\blacksquare}$. Hence, $\bigcup \mu \subseteq W_{\blacksquare} \subseteq Y_1$. Finally, we assume that $\mathbf{f}$ is identity when restricted to $\bigcup \mu$. This leads us to the conclusion that \giej-logic is complete with respect to the class of all \gtfi-frames. 

Let us tell a few words about the properties of possible worlds in this framework. If $w \in W \setminus \bigcup \mu$ and there is $\varphi$ for which $w \Vdash \blacksquare \varphi \to \Box \varphi$, then $w \nVdash \blacksquare \varphi$. Of course in this situation we also have $w \Vdash \blacksquare \varphi \to \varphi$. If $w \in \bigcup \mu$, then for \emph{any} $\varphi$ we have $w \Vdash \blacksquare \varphi \to \Box \varphi$.

For convenience, let us introduce possibilities $\blacklozenge$ and $\Diamond$ defined as usual (in a dual manner to $\blacksquare$ and $\Box$, respectively). If $w \in Y_2$, then we cannot say that for \emph{any} $\varphi$, $w \Vdash \blacksquare \varphi \to \blacklozenge \varphi$ (we did not assume that if $X \in \mathcal{N}_{w}$, then $-X \notin \mathcal{N}_{w}$). If $w \in Y_1$, then for \emph{any} $\varphi$ the formula in question is satisfied. However, if $w \in Y_1 \setminus \bigcup \mu$, then it may be (for \emph{certain} $\psi$) that $w \nVdash \blacksquare \psi \to \Diamond \psi$ and $w \nVdash \blacklozenge \psi \to \Diamond \psi$. 

On the other hand, we are aware of the fact that such distinctions are not characterizations in a proper logical sense. In fact, it is easy to notice that \giej-logic is complete also with respect to this subclass of \gtfi-models which is defined by the condition $\bigcup \mu = Y_1$.

\end{document}